\date{\today}
\def\1{{\bf 1}}
\def\GZ{{\mathcal{GZ}}}
\def\Zy{{\mathcal Z}}
\def\deg{\text{deg}\,}
\def\w{\wedge}
\def\C{{\mathbb C}}
\def\w{{\wedge}}
\def\P{{\mathbb P}}
\def\bl{{\mathcal B}}
\def\A{{\mathcal A}}
\def\B{{\mathcal B}}
\def\codim{{\rm codim\,}}
\def\Z{{\mathbb Z}}
\def\J{{\mathcal J}}
\def\nbh{neighborhood }
\def\be{\begin{equation}}
\def\ee{\end{equation}}
\def\Ok{\mathcal O}
\def\mult{{\rm mult}}
\def\Pk{{\mathbb P}}
\def\di{{\diamond}}
\def\bl{{\bullet_L}}
\newtheorem{thm}{Theorem}[section]
\newtheorem{lma}[thm]{Lemma}
\newtheorem{prop}[thm]{Proposition}
\theoremstyle{definition}
\newtheorem{df}[thm]{Definition}
\theoremstyle{remark}
\newtheorem{preremark}[thm]{Remark}
\newtheorem{preex}[thm]{Example}
\newenvironment{ex}{\begin{preex}}{\qed\end{preex}}
\numberwithin{equation}{section}
\title[On non-proper intersections and local intersection numbers]{On non-proper intersections and 
local intersection numbers}
\begin{document}

\date{\today}

\author[Andersson \& Samuelsson Kalm \&
Wulcan]
{Mats Andersson \&
H\aa kan Samuelsson Kalm 
\& Elizabeth Wulcan} 

\address{Department of Mathematical Sciences\\Chalmers University of Technology and University of
Gothenburg\\SE-412 96 G\"OTEBORG\\SWEDEN}

\email{matsa@chalmers.se, hasam@chalmers.se, wulcan@chalmers.se}

\subjclass{}

%\thanks{The first, third and fourth  author  were
 % partially supported by the Swedish
 % Research Council}
%%the second author was partially supported by
%%  a post doctoral fellowship from the Swedish
%%  Research Council}

\begin{abstract}
Given pure-dimensional (generalized) cycles $\mu_1$ and $\mu_2$ on a complex manifold $Y$ 
we introduce a product $\mu_1\diamond_{Y} \mu_2$ that is a generalized cycle whose multiplicities
at each point are the local intersection numbers at the point.  
If $Y$ is projective, then given a very ample line bundle $L\to Y$ we define a product $\mu_1\bl \mu_2$ 
whose multiplicities at each point also coincide with the local intersection numbers.
In addition, provided that $\mu_1$ and $\mu_2$ are effective, this product satisfies a B\'ezout inequality.
If $i\colon Y\to \Pk^N$ is an embedding such that $i^*\Ok(1)=L$, then $\mu_1\bl \mu_2$
can be expressed as a mean value of  St\"uckrad-Vogel  cycles on $\Pk^N$.   
There are quite explicit relations between $\di_Y$ and  $\bl$.
 \end{abstract}

%%%%%%%%%%

\maketitle

\section{Introduction}

Let $Y$ be a complex manifold of dimension $n$.  A cycle on $Y$ is a 
locally finite linear combination, over $\Z$, of subvarieties of $Y$.   
Assume that $\mu_1$ and $\mu_2$ are equidimensional cycles on $Y$. 
If they intersect properly, i.e., the expected dimension
$\rho=\dim \mu_1+\dim \mu_2-n$ is equal to the 
dimension of their set-theoretical intersection $V=|\mu_1|\cap|\mu_2|$,
 %%
%then If $\mu_j$ intersect properly, that is, if $\codim V=\kappa$, 
then there is a well-defined intersection cycle
\begin{equation*}%\label{basal}
\mu_1\cdot_Y  \mu_2=\sum m_j V_j,
\end{equation*}
where $V_j$ are the irreducible components of $V$ and $m_j$ are integers.
 %%%
If  $\mu_1$ and $\mu_2$ do not intersect properly, i.e., $\dim V > \rho$,  following \cite{Fult},
the product $\mu_1\cdot_Y  \mu_2$ is represented by a cycle of
dimension $\rho$ on $V$ that is determined up to rational equivalence, i.e.,  
a Chow class on $V$. 
In case $Y=\P^n$ there is a construction of a product $\mu_1\cdot_{SV}\mu_2$
due to St\"uckrad and Vogel  that is represented by a cycle on $V$ with components of various degrees.
This cycle, which we call a SV-cycle, is obtained by a quite explicit procedure, which however 
involves various choices. 
By van~Gastel's formula, \cite{gast},  one can obtain the Chow class $\mu_1\cdot_Y  \mu_2$
from a generic representative of $\mu_1\cdot_{SV}\mu_2$.  
\smallskip

In the '90s Tworzewski, \cite{Twor}, introduced {\it local intersection numbers}
$\epsilon_\ell(\mu_1,\mu_2,x)$ at each point $x$,  $0\le \ell\le \dim V$, which reflect the
complexity of the intersection at dimension $\ell$.  In particular, if the intersection is proper, then
$\epsilon_\ell(\mu_1,\mu_2,x)$ is the multiplicity of $\mu_1\cdot_Y\mu_2$ at $x$ when $\ell=\dim V$
and $0$ otherwise.
In this case thus all these numbers are represented by the global   cycle
$\mu_1\cdot_Y\mu_2$.
In general however 
there is no single cycle whose multiplicities of 
its components of various dimensions are precisely the local intersection numbers for all points
of $Y$, that is,  which represents all these local intersection numbers.

Since both the local and global intersections are defined within algebraic geometry, it is natural from
such a point of view to look for a way to unify these theories. However since this cannot be done by cycles
it is  natural to look for slightly more general geometric objects that may have the desired local multiplicities and 
at the same time in a reasonable sense represent the global intersection products.  
 
To this end, in \cite{aeswy1}, together with Eriksson and Yger, we introduced, for any reduced analytic space $X$, the group
$\B_k(X)$ of {\it generalized cycles} of dimension $k$, modulo a certain equivalence relation, that contains
the group $\Zy_k(X)$ of cycles of dimension $k$ as a subgroup. The {\it generalized cycles classes} in $\B_k(X)$ share many properties
with (usual) cycles. 
For instance, each $\mu\in\B_k(X)$ has a well-defined (integer) multiplicity 
$\mult_x\mu$ at each point $x\in X$ and a Zariski support $|\mu|$.  Each generalized cycle class 
is a unique sum of irreducible generalized cycle classes. 
Moreover, the generalized cycle classes $\mu$ in $\B_k(X)$ that have Zariski support 
$|\mu|$ on a subvariety $Z\subset  X$ are naturally identified with $\B_k(Z)$; 
see Section~\ref{kung} below for precise definitions and statements.
One can think of generalized cycle classes as mean values of cycles.  
We let $\B(X)=\oplus_0^m\B_k(X)$ if $m=\dim X$. 

\smallskip
For $\mu_1,\mu_2\in\B(\Pk^n)$, we defined with Eriksson and Yger, \cite{aeswy2},
an element  $\mu_1\bullet \mu_2\in \B(\Pk^n)$
that is equal to $\mu_1\cdot_{\Pk^n}\mu_2$ if the intersection is proper,
and whose  multiplicities at each point coincide with the local intersection numbers. If 
$\mu_j$ have pure dimensions and the expected dimension
\begin{equation}\label{rho}
\rho:=\dim\mu_1+\dim\mu_2-n
\end{equation}
is non-negative, then we have the B\'ezout equality
\begin{equation}\label{bezout}
\deg (\mu_1\bullet\mu_2)=\deg\mu_1\cdot\deg\mu_2.
\end{equation}
Roughly speaking,
 $\mu_1\bullet \mu_2$ is defined as a mean value of SV-cycles $\mu_1\cdot_{SV}\mu_2$ in case
 $\mu_j$ are cycles.

\smallskip
In this paper we  introduce two global intersection products that both respect all the 
local intersection numbers. The first one is defined on an arbitrary complex manifold.
The second one generalizes the $\bullet$-product and satisfies a B\'ezout inequality, but it is only defined on
projective manifolds.

%In our first main result in this paper, $Y$ is an arbitrary complex manifold.
 Here is our first main theorem.

\begin{thm}\label{main1}
Let $Y$ be a complex 
%\footnote{It is enough that the diagonal in $Y\times Y$ is defined by a holomorphic section of
%a vector bundle over $Y\times Y$.} 
% 
manifold. There is a  $\Z$-bilinear commutative pairing
$$
\B(Y)\times \B(Y)\to\B(Y), \quad (\mu_1,\mu_2)\mapsto \mu_1\di_Y \mu_2,
$$
with the following properties:

\smallskip
\noindent
(i)  $\mu_1\di_Y\mu_2$ has Zariski support on 
$$
V:=|\mu_1|\cap |\mu_2|.
$$

\smallskip
\noindent
(ii)  For each $x\in Y$ we have 
\begin{equation}\label{stare}
\epsilon_\ell(\mu_1,\mu_2,x) =\mult_x(\mu_1\di_Y\mu_2)_\ell,  \quad 0\le\ell\le \dim V,
\end{equation}
where $(\ \  )_\ell$ denotes the component of dimension $\ell$.

\smallskip
\noindent
(iii)  If $\mu_1,\mu_2$ are cycles that intersect properly, then 
$\mu_1\di_Y\mu_2=\mu_1\cdot_Y\mu_2$.

\smallskip
\noindent
(iv)  The natural image in the cohomology group $\widehat H^{*,*}(V)$ of the component 
$(\mu_1\di_Y\mu_2)_\rho$ of the expected dimension $\rho$, cf.~\eqref{rho},
coincides with the image of the Chow class $\mu_1\cdot_Y\mu_2$.
\end{thm}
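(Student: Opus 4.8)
The plan is to build $\mu_1\di_Y\mu_2$ by a local-to-global procedure, using the already existing $\bullet$-product on $\Pk^n$ as the local model. First I would reduce to the case where $\mu_1,\mu_2$ have pure dimensions, by bilinearity and by decomposing into pure-dimensional components; the pairing on all of $\B(Y)$ is then obtained by extending $\Z$-bilinearly, and commutativity is inherited from the pure-dimensional case. For the pure-dimensional case, the strategy is to cover $Y$ by coordinate charts $U\cong$ open subsets of $\C^n$, embed each chart into $\Pk^n$, form the product $\mu_1|_U\bullet\mu_2|_U$ there, and check that these local products patch together to a well-defined element of $\B(V)\subset\B(Y)$. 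The crucial input here is that the $\bullet$-product — or rather its defining mean value of St\"uckrad–Vogel cycles — has the right naturality: its restriction to an open set depends only on the restrictions of $\mu_1,\mu_2$, and it is independent of the chosen affine embedding, because the local intersection numbers $\epsilon_\ell(\mu_1,\mu_2,x)$ that control the multiplicities are biholomorphic invariants of germs. So I would prove a lemma stating that for two different charts $U,U'$ around a point and two embeddings into $\Pk^n$, the resulting generalized cycle classes agree on $U\cap U'$; this is the compatibility needed to glue, and it should follow from the local nature of the SV-construction together with the intrinsic characterization of $\bullet$ via local intersection numbers.

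Once the object $\mu_1\di_Y\mu_2\in\B(V)$ is constructed, properties (i)–(iv) are checked as follows. Property (i), Zariski support on $V=|\mu_1|\cap|\mu_2|$, is immediate from the construction since each local piece is supported on $V\cap U$ and $|\mu_1\bullet\mu_2|\subset|\mu_1|\cap|\mu_2|$ by the corresponding property of $\bullet$. Property (ii), that the multiplicity of the $\ell$-dimensional component at $x$ equals $\epsilon_\ell(\mu_1,\mu_2,x)$, is precisely the defining property of $\bullet$ on $\Pk^n$ from \cite{aeswy2}, transported through the chart; one needs here that the local intersection numbers of Tworzewski are genuinely local, so that computing them in a chart (hence in $\Pk^n$ via the embedding) gives the same numbers as on $Y$ — this is standard, and I would cite \cite{Twor}. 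Property (iii): if $\mu_1,\mu_2$ are honest cycles meeting properly, then in each chart $\mu_1|_U\bullet\mu_2|_U=\mu_1|_U\cdot\mu_2|_U$ by the analogous property of $\bullet$, and the local proper intersection products glue to the global $\mu_1\cdot_Y\mu_2$ (this is classical — proper intersection is local); bilinearity then extends this to the stated generality.

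Property (iv) is the subtlest point and I expect it to be the main obstacle. One has to show that the class in $\widehat H^{*,*}(V)$ represented by the top-dimensional (expected-dimension $\rho$) component $(\mu_1\di_Y\mu_2)_\rho$ equals the image of the Chow class $\mu_1\cdot_Y\mu_2$ of Fulton. Locally, on $\Pk^n$, this is exactly van Gastel's theorem \cite{gast}: the expected-dimensional part of a generic SV-cycle represents the refined intersection product, hence its mean value does too (the Chow class being rigid under generic perturbation). The work is to globalize this: one must verify that the local classes $[(\mu_1|_U\bullet\mu_2|_U)_\rho]$ in $\widehat H^{*,*}(V\cap U)$ are the restrictions of a single global class, and that this global class is $\mu_1\cdot_Y \mu_2$. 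I would argue this by covering $V$ by charts, invoking van Gastel locally, and using a Mayer–Vietoris / sheaf-theoretic patching argument in $\widehat H^{*,*}$, together with the fact that Fulton's product $\mu_1\cdot_Y\mu_2$ is itself characterized by local data on $V$ (it is constructed from the normal cone of the diagonal, which is a local gadget). The key technical check is that $\widehat H^{*,*}$, as set up in the companion papers, has the good functoriality (restriction maps, a comparison map from Chow groups) making this patching legitimate; granting that, (iv) follows. Uniqueness of $\di_Y$ is not asserted, so no further rigidity argument is needed.
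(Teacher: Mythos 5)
There is a genuine gap, and it sits exactly at the step you call the ``crucial input'': the compatibility lemma needed to glue chart-wise $\bullet$-products cannot be proved the way you propose, and in fact the objects you want to glue do not agree on overlaps. The $\bullet$-product on $\Pk^n$ is not an intrinsic local operation: it is built from $M_k^{L,\eta}\w$ with $L=\Ok(1)$, so its restriction to a chart carries the Fubini--Study form, and two embeddings of the same open set $U\cap U'$ arising from different charts are in general not related by an automorphism of $\Pk^n$, so there is no invariance to appeal to. More fundamentally, your argument for the compatibility lemma is that the multiplicities (the local intersection numbers, which are indeed biholomorphically invariant) determine the class; they do not. A generalized cycle class of the form $\gamma\w\mu$ with $\gamma$ smooth of positive degree has vanishing multiplicities at every point without being zero in $\B$, and this is precisely the phenomenon that separates $\di_{\Pk^n}$ from $\bullet$: by \eqref{potatis} they differ by factors $(1+\omega)^{\ell-\rho}$, and concretely $E\bullet_L E=E$ while $E\di_Y E=E-\omega_L\w E$ in \eqref{solo1}--\eqref{solo2}. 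So chart-wise $\bullet$-products with respect to different embeddings have the same multiplicities but are genuinely different elements of $\B$ on overlaps, and no gluing is possible along these lines; one would also, separately, need a sheaf-type gluing statement for $\B$, which is not available and which the paper never needs.

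The paper avoids all of this by defining $\mu_1\di_Y\mu_2$ globally and intrinsically through the diagonal: $j_*(\mu_1\di_Y\mu_2)=c(N_\Delta(Y\times Y))\w S(\J_\Delta,\mu_1\times\mu_2)$, where the only patching issue---defining $S(\J_\Delta,\cdot)$ without a global section---is handled by Proposition~\ref{lastbil} via a Hermitian metric on the normal bundle, at the level of currents, not of classes with prescribed multiplicities. Then (ii) follows because the correction $c(N_\Delta)-1$ consists of smooth forms of positive degree, hence kills multiplicities; (iii) uses Lemma~\ref{krokant}, \eqref{trottoar}, \eqref{bus} and the identity $c(N_\J X)\w S(\J,X)=[Z_\J]$ from \cite{aeswy1}; and (iv) is not proved by a Mayer--Vietoris patching in $\widehat H^{*,*}$ (which you would have to set up from scratch and which is the weakest point of your plan) but by identifying $(\mu_1\di_Y\mu_2)_\rho$ with the already-global product $\mu_1\cdot_{\B(Y)}\mu_2$ of \cite{aeswy2} and quoting the comparison with the Chow class proved there. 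If you want to salvage your route, you would essentially have to redo this intrinsic construction locally anyway (replacing $\bullet$ by something chart-independent such as $M^\sigma$ for local generators of $\J_\Delta$ twisted by $c(TY)$), at which point you have reproduced the paper's argument rather than found an alternative to it.
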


For the definitions of the cohomology groups $\widehat H^{*,*}(V)$, see Section~\ref{prel}. 
Part (ii) means that $\di_Y$ solves our representation problem. However, (iv) suggests that already the component
of dimension $\rho$ is as `big' as the Chow class in a cohomological sense. In particular, if $Y=\Pk^n$
this implies that 
$$
\deg(\mu_1\di_{\Pk^n}\mu_2)_\rho=\deg (\mu_1\cdot_{\Pk^n}\mu_2),
$$
which by B\'ezout's equality and \eqref{bezout} is equal to $\deg(\mu_1\bullet\mu_2)$  if $\rho\ge 0$.
In general, the degree of the full generalized cycle class is much larger. For effective
generalized cycle classes, in particular for cycles, we have the estimate, see Section~\ref{kongas},
\begin{equation}\label{buske}
\deg (\mu_1\di_{\Pk^n}\mu_2)\le 2^{\dim V-\rho}\deg (\mu_1\bullet \mu_2).
\end{equation}
The constant, which is the best possible,  blows up when the intersection is far from being proper,
i.e.,  when $\dim V-\rho$ is large.
%However, if $\rho\ge 0$, then $ \deg (\mu_1\bullet \mu_2)$ is precisely equal to $\deg\mu_1\cdot\deg \mu_2$.

\smallskip 
It is thus natural to look for an extension of the $\bullet$-product, in order to get a representation
of the local intersection numbers that is not `too big'. To do this we restrict to
a projective manifold $Y$.   Let $L\to Y$ be a very ample line bundle. 
By definition then there is an embedding $i\colon Y\to \Pk^M$ 
for some $M$ such that $L=i^*\Ok(1)_{\Pk^M}$.  
Given $\mu_1,\mu_2 \in \mathcal{B}(Y)$ we 
can define a product $\mu_1\bl\mu_2$ such that 
\begin{equation}\label{lbull} 
i_*( \mu_1\bl\mu_2)=i_*\mu_1\bullet i_*\mu_2.
\end{equation}
For $\mu\in\B_k(Y)$ we define
\begin{equation}\label{kulting}
\deg_L\mu=\int_Y \mu\w c_1(L)^k
\end{equation}
%where $\omega_L=c_1(L)$. 
%
and extend to general $\mu\in\B(Y)$ by linearity.
Here is our second main result.

\begin{thm} \label{main2} 
Assume that $Y$ is a projective manifold and let $L\to Y$ be a very ample line bundle.
%and $\bl_Y$ defined by \eqref{lbull}.  

\smallskip
\noindent  
(i)
The pairing  
$
\B(Y)\times \B(Y)\to \B(Y), \  (\mu_1,\mu_2)\mapsto \mu_1\bl \mu_2,
$
defined by \eqref{lbull} is commutative and $\Z$-bilinear. It  depends on 
the choice of $L$ but not  on the embedding $i$.

\smallskip
\noindent  
(ii)   The product $\mu_1\bl\mu_2$ has Zariski support on $V=|\mu_1|\cap|\mu_2|$.

\smallskip
\noindent  
(iii)
For each $x\in Y$ we have
\begin{equation}\label{stenstod}
\mult_x(\mu_1\bl\mu_2)_\ell=\epsilon_\ell(\mu_1, \mu_2, x), \quad  \ell=0,1,\ldots, \dim V,
\end{equation}
where $(\ \  )_\ell$ denotes the component of dimension $\ell$.
 %
%\smallskip
%\noindent  
%(v)

\smallskip
\noindent  
(iv)
If the $\mu_1$ and $\mu_2$ are effective, then $\mu_1\bl \mu_2$  is effective and 
\begin{equation}\label{olikhet}
\deg_L(  \mu_1\bl \mu_2)\le \deg_L\mu_1\cdot\deg_L\mu_2.
\end{equation}
%
%$d:=\dim\mu_1+\cdots+\dim\mu_r-(r-1)N\ge 0$.  
%
%In case $\mu_j$ are
%ordinary cycles that intersect properly,  \eqref{poker} is the classical intersection product.
%
\smallskip
\noindent  
(v)
If $\mu_1, \mu_2$ are cycles that intersect
  properly, then   \begin{equation}\label{studsmatta}
\mu_1\bl\mu_2 = \mu_1\cdot_{Y}\mu_2 + \cdots,
\end{equation}
where $\cdots$ are terms with lower dimension and vanishing multiplicities. 
\end{thm}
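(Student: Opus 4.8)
The plan is to reduce everything about $\bl$ to the already-established properties of $\bullet$ on $\Pk^N$ via the defining relation \eqref{lbull}, so that Theorem~\ref{main2} becomes essentially a transport of Theorem-type statements about $\bullet$ through a closed embedding. First I would address (i): fix an embedding $i\colon Y\to\Pk^M$ with $i^*\Ok(1)=L$; since $i$ is a closed embedding, $i_*\colon\B(Y)\to\B(\Pk^M)$ is injective onto the generalized cycle classes supported on $i(Y)$, and one checks $i_*\mu_1\bullet i_*\mu_2$ is again supported on $i(Y)=|i_*\mu_1|\cap\cdots\subset i(Y)$ (indeed on $i(V)$), hence has a unique preimage under $i_*$; this defines $\mu_1\bl\mu_2$ and makes bilinearity and commutativity immediate from the corresponding properties of $\bullet$ and linearity of $i_*$. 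For independence of the embedding: two embeddings $i,i'$ with $i^*\Ok(1)=i'^*\Ok(1)=L$ differ, after composing with a Segre/Veronese-type map, by a linear automorphism or can be connected inside a larger $\Pk^{N}$; the key input is that $\bullet$ on $\Pk^N$ is invariant under linear embeddings $\Pk^M\hookrightarrow\Pk^N$ and under $\mathrm{PGL}$, which is part of the cited construction in \cite{aeswy2}. I expect this is where one must be slightly careful — it is the main obstacle — because one needs a clean statement that $\bullet$ commutes with pushforward along linear embeddings; if such a statement is in \cite{aeswy2} it is routine, otherwise one argues directly from the SV-cycle description, noting that SV-cycles on $\Pk^N$ built from data lying in a linear $\Pk^M$ are the pushforwards of SV-cycles computed in $\Pk^M$.

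Next, (ii) and (iii) follow pointwise. Since $i$ is an isomorphism onto its image, $|\mu_1\bl\mu_2|=i^{-1}|i_*\mu_1\bullet i_*\mu_2|\subseteq i^{-1}(i(V))=V$ by the support property of $\bullet$, giving (ii). For (iii), for $x\in Y$ we have $\mult_x(\mu_1\bl\mu_2)_\ell=\mult_{i(x)}(i_*\mu_1\bullet i_*\mu_2)_\ell$ because pushforward along a closed embedding preserves multiplicities and dimensions of components; by the defining property of $\bullet$ (stated in the introduction, between \eqref{rho} and \eqref{bezout}) this equals $\epsilon_\ell(i_*\mu_1,i_*\mu_2,i(x))$, and finally the local intersection numbers of Tworzewski are invariant under embedding, $\epsilon_\ell(i_*\mu_1,i_*\mu_2,i(x))=\epsilon_\ell(\mu_1,\mu_2,x)$ — this last equality is a standard property of $\epsilon_\ell$ (local, depends only on a neighborhood of $x$ in $Y$, which $i$ maps isomorphically to a neighborhood in $i(Y)$), and can also be quoted from \cite{Twor} or \cite{aeswy2}.

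For (iv), effectivity of $\mu_1\bl\mu_2$ given effective $\mu_j$ follows from effectivity of $i_*\mu_1\bullet i_*\mu_2$ (part of the $\bullet$-construction for effective classes) together with the fact that $i_*^{-1}$ of an effective class supported on $i(Y)$ is effective. For the B\'ezout inequality: by \eqref{kulting} and the projection formula, $\deg_L(\mu_1\bl\mu_2)=\int_Y(\mu_1\bl\mu_2)\w c_1(L)^\ell = \int_{\Pk^M} i_*(\mu_1\bl\mu_2)\w c_1(\Ok(1))^\ell$, and the latter, by \eqref{lbull}, is $\int_{\Pk^M}(i_*\mu_1\bullet i_*\mu_2)\w\omega^\ell$, i.e.\ a sum of the degrees of the components of $i_*\mu_1\bullet i_*\mu_2$ weighted by $\omega$; but the B\'ezout equality \eqref{bezout} bounds the full degree $\deg(i_*\mu_1\bullet i_*\mu_2)$ by $\deg i_*\mu_1\cdot\deg i_*\mu_2=\deg_L\mu_1\cdot\deg_L\mu_2$ when $\rho\ge0$, and since $\deg_L$ picks out a subsum (only the component of top dimension contributes when paired with the right power of $c_1(L)$ — here one must track the bookkeeping of which $\ell$ appears in \eqref{kulting}), effectivity gives the inequality; the case $\rho<0$ is handled by noting $V$ is still the support and degrees are nonnegative. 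Finally (v): if $\mu_1,\mu_2$ are cycles intersecting properly in $Y$, then $i_*\mu_1,i_*\mu_2$ intersect properly in $\Pk^M$ with $|i_*\mu_1|\cap|i_*\mu_2|=i(V)$ of the expected dimension $\rho$; by the stated property of $\bullet$, $i_*\mu_1\bullet i_*\mu_2 = i_*\mu_1\cdot_{\Pk^M}i_*\mu_2 + (\text{lower-dimensional generalized-cycle terms with zero multiplicities})$, and $i_*\mu_1\cdot_{\Pk^M}i_*\mu_2=i_*(\mu_1\cdot_Y\mu_2)$ by functoriality of proper intersection under closed embedding; pulling back through $i_*^{-1}$ yields \eqref{studsmatta}. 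The only genuine subtlety throughout is the embedding-independence in (i) and the precise normalization/bookkeeping of $\deg_L$ in (iv); everything else is a formal transfer along $i_*$.
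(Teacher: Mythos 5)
Your reduction of (ii) and (iii) to the corresponding facts for $\bullet$ via \eqref{tosca50} and \eqref{sukta} is exactly the paper's argument, but (i) and (v) have genuine gaps. For (i), the embedding-independence cannot be obtained the way you suggest: two embeddings with $i^*\Ok(1)=L$ in general come from different linear subsystems $W,W'\subset H^0(Y,L)$ (possibly of different dimensions), and they are neither related by a projective linear automorphism nor by a linear embedding $\Pk^M\hookrightarrow\Pk^N$; comparing either with the embedding given by $W+W'$ goes through a linear \emph{projection} restricted to the image, and compatibility of $\bullet$ with such projections is precisely what would have to be proved -- it does not follow from PGL-invariance or from pushforward along linear subspaces (which only handles linearly dependent sections spanning the same subspace). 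The paper closes this gap differently: after reducing to linearly independent sections it proves Proposition~\ref{snarstucken}, whose formula \eqref{snar2} expresses $\mu_1\bl\mu_2$ entirely in terms of $\mu_1\di_Y\mu_2$, $c(TY)$ and $\omega_L$, i.e.\ in data intrinsic to $(Y,L)$; independence of the embedding is then immediate. That proposition in turn rests on Proposition~\ref{koks}, the comparison \eqref{potatis} with $\di_{\Pk^M}$, and the inversion Lemma~\ref{mangel} -- none of which appears in your outline, so the main claim of (i) is not established by your argument.

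For (v), your premise is false: properness is \emph{not} preserved by $i$. In $\Pk^M$ the expected dimension is $\hat\rho=\dim\mu_1+\dim\mu_2-M$, which is strictly smaller than $\rho$ whenever $M>n$, while the actual intersection still has dimension $\rho$; hence $i_*\mu_1$ and $i_*\mu_2$ intersect non-properly and you cannot invoke ``$\bullet$ equals the proper intersection cycle'' in $\Pk^M$. Indeed the extra terms in \eqref{studsmatta} are nonzero in general exactly because of this excess (see the $\Pk^1\times\Pk^1$ example in Section~\ref{examples}). The paper instead deduces (v) from \eqref{snar2} together with Theorem~\ref{main1}(iii), expanding $(1-\omega_L)^{k-d-1}$ and noting the extra terms are smooth positive-degree forms times classes, hence have vanishing multiplicities. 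Relatedly, in (iv) you should not appeal to the equality \eqref{bezout} ``when $\rho\ge0$'': the relevant expected dimension in $\Pk^M$ is $\hat\rho$, which may be negative even when $\rho\ge0$; what is needed, and what the paper cites, is the inequality $\deg(\nu_1\bullet\nu_2)\le\deg\nu_1\cdot\deg\nu_2$ for \emph{effective} classes from \cite[Theorem~1.1]{aeswy2}, combined with $\deg_L\mu=\deg_{\Pk^M}i_*\mu$ via the projection formula. Also, $\deg_L$ of the mixed-dimension product is the full sum of the degrees of all its components (each paired with the power of $c_1(L)$ matching its own dimension), not a ``subsum'' in which only the top-dimensional part contributes, so that part of your bookkeeping should be removed.
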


In view of (iii) thus $\mu_1\bl\mu_2$ has the `right'  multiplicities at each point, whereas
(iv) says that we have control of the total  mass of $\mu_1\bl\mu_2$.
In case $Y=\P^n$
and $L=\Ok(1)$, then $\bullet_L$ coincides with $\bullet$. In this case 
the dots in \eqref{studsmatta} vanish.  However, in general they do not, see Example~\ref{kokong} in Section~\ref{examples}.  

 \smallskip
 
% \begin{remark}
% Theorem~1.1 

 %\smallskip
 
The plan of this paper is as follows.  In Section~\ref{prel} we recall necessary material from
\cite{aeswy1,aeswy2}. The $\di_Y$-product is defined in 
Section~\ref{di} and Theorem~\ref{main1} is proved. 
The relation to the $\bullet$-product on $\Pk^n$
is discussed in Section~\ref{kongas}.
%is We begin with rec
%%
In Section~\ref{bl} we prove Theorem~\ref{main2} and provide formulas that relate $\di_Y$ and $\bl$.
 In Section~\ref{further} we provide some further properties of these products,
 and in the final section, Section~\ref{examples},  we give various explicit examples.

\smallskip
\noindent {\bf Acknowledgment} \  We would like to thank Bo Berndtsson, Martin Raum and Jan Stevens 
for valuable discussions on questions in this paper.  We would also like to thank the referee for 
important comments and suggestions.

\section{Preliminaries}\label{prel}

Throughout this section $X$ is a reduced analytic space of dimension $n$. We let 
$\Zy_k(X)$ denote the $\Z$-module of $k$-cycles on $X$. 
Given $\mu\in\Zy_k(X)$ there is the  associated closed current $[\mu]$, the Lelong current,
of bidegree $(n-k,n-k)$. We will often identify $\mu$ and its Lelong current. 
%Recall that
%the the multiplicity $\mult_x \mu$ is equal to the Lelong number of $[\mu]$ at $x$.
If nothing else is stated the definitions and results in this section are
from  \cite[Sections~3 and 4]{aeswy1}.

\subsection{Generalized cycles}\label{kung}
The group $\GZ_k(X)$ of generalized cycles of dimension $k$ was introduced in \cite{aeswy1}. It is the
$\Z$-module generated by (closed) $(n-k,n-k)$-currents of the form
$\tau_*\alpha$, where
$\tau\colon W\to X$ is a proper mapping and 
\begin{equation}\label{alpha}
\alpha=\hat c_1(L_1)\w\ldots\w \hat c_1(L_r),
\end{equation}
where $L_j\to W$ are Hermitian\footnote{Throughout this paper all 
Hermitian metrics are smooth.} line bundles, and $\hat c_1(L_j)$ are the associated
first Chern forms.  We let $\GZ(X)=\oplus_0^n\GZ_k(X)$.  Here $W$ can be any 
complex variety but by virtue of Hironaka's theorem we may assume that $W$
is a connected manifold.  It is clear that generalized cycles are closed currents of order $0$.
Moreover, their Lelong numbers (multiplicities, see below) are integers. This means that 
$\GZ(X)$ is a quite restricted class of closed currents. 
We are basically interested in a certain quotient space $\B(X)=\oplus_0^n \B_k(X)$, where
$\B_k(X)$ are quotient spaces of $\GZ_k(X)$.  For precise definitions  and proofs
of the properties listed below, see \cite[Sections~3 and 4]{aeswy1}.  

%\begin{itemize} 
\smallskip
\noindent
(i) We have a natural inclusion $\Zy_k(X)\to \B_k(X)$ for each $k$ and hence an inclusion
$\Zy(X)=\oplus_0^n\Zy_k(X)\to \B(X)=\oplus_0^n\B_k(X)$.

\smallskip
\noindent
(ii) Each $\mu\in \B_k(X)$ has a well-defined Zariski support $|\mu|$; it is the smallest Zariski closed set
such that  $\mu$ has a representative in $\GZ_k(X)$ that vanishes in its complement. 

\smallskip
\noindent
(iii) Given $\mu\in\B(X)$ also its restriction $\1_V\mu$ to  the subvariety $V\subset X$ is an element in $\B(X)$.

\smallskip
\noindent
(iv) If $f\colon X\to X'$ is a proper mapping, then the push-forward $f_*$ induces a mapping
$f_*\colon \B_k(X)\to \B_k(X')$ that coincides with the usual push-forward on cycles. 

\smallskip
\noindent
(v) If $i\colon X\to X'$ is an embedding, then $i_*\colon \B_k(X)\to \B_k(X')$ is injective, and the image
is precisely the elements in $\B_k(X')$ with Zariski support on $i(X)$.  

\smallskip
\noindent
(vi) If $E\to X$ is a vector bundle, then we have natural mappings $c_k(E)\colon \B_*(X)\to \B_{*-k}(X)$.
The image of $\mu$ is represented by $\hat c_k(E)\w\hat\mu$, where $\hat\mu\in\GZ(X)$ represents
$\mu$ and $\hat c_k(E)$ is the Chern form associated with a (smooth) Hermitian metric on $E$. 

\smallskip
\noindent
(vii) If $f\colon X\to X'$ is a proper mapping and $E'\to X'$ is a vector bundle, then 
$f^*c_k(E')=c_k(f^*E')$,  and if $\mu\in \B_*(X)$, then
\begin{equation}\label{bus}
f_*(f^*c_k(E')\w\mu)=c_k(E')\w f_*\mu.
\end{equation}

\smallskip
\noindent
(viii) If $\mu\in\B(X)$ and $\mu'\in\B(X')$, where $X'$ is another reduced analytic space, then there is a well-defined
element  $\mu\times\mu'\in\B(X\times X')$, see \cite[Lemma~2.1]{aeswy2}. 

%\end{itemize}
\smallskip
In the recent paper \cite{Yger}  A.~Yger introduces the related notion of algebraic generalized cycle as a generalization
of (complex) algebraic cycle.

\subsection{Irreducibility}
A generalized cycle class $\mu\in \B(X)$ is irreducible if its Zariski support $|\mu|$ is an irreducible subvariety and 
$\mu$  has a representative
$\hat\mu$ with Zariski support $|\mu|$ such that $\1_W\hat\mu=0$ for each subvariety $W\subset X$ 
that does not contain $|\mu|$.
This condition on $\hat\mu$ is equivalent to that $\hat\mu$ is a (finite) sum of elements
of the form $\tau_*\alpha$, where $\alpha$ is a form as in \eqref{alpha} on $W$,  and
$\tau\colon W\to |\mu|$ is surjective. Notice that
these various terms can have different dimensions.

\smallskip
\noindent
Each element
in $\GZ(X)$, and in $\B(X)$, has a unique decomposition in irreducible components
with different Zariski supports. Each irreducible element  has in turn a unique decomposition
in components of various dimensions. 
%We let $\B_k(X)$ denote the elements in $\B_k(X)$ of pure dimension $k$.
% of various dimensions.
 %%
 
\smallskip
\noindent
There is a unique
decomposition
\begin{equation}\label{deco}
\mu=\mu_{fix}+\mu_{mov},
\end{equation}
where $\mu_{fix}$ is an ordinary cycle, whose irreducible components are called
the {\it fixed} components of $\mu$,
and $\mu_{mov}$, whose irreducible components are the {\it moving} components. Each  
moving component has strictly lower dimension than its Zariski support.
 
%which Zariski-supports have strictly larger dimension than the componentsger than their dimensions.  

%\smallskip

\subsection{Multiplicities} \label{multi}
If $\mu$ is a cycle,
then the multiplicity $\mult_x\mu$ at $x\in X$ is precisely the Lelong number at $x$ of
the associated Lelong current. If $X$ is not smooth, then 
$\mult_x\mu=\mult_{i(x)} i_*\mu$ if $i\colon X\to X'$ is an embedding and $X'$ is smooth.
There is a suitable definition of Lelong number that extends
to all generalized cycles and it turns out to depend only of their classes in $\B(X)$,
see \cite[Section~6]{aeswy1}. 
In this way we have for each  $\mu\in \B_k(X)$ 
well-defined multiplicities $\mult_x\mu$ at all points $x\in X$, and these numbers are integers.
They are local in the following sense:
 If $U\subset X$ is an open subset, then we have natural restriction mappings
$r_U\colon \B_k(X)\to \B_k(U)$, and $\mult_x\mu=\mult_x r_U\mu$.
 
If $i\colon X\to X'$, where $X'$ is smooth and $\mu\in\B(X)$, then
\begin{equation}\label{tosca50}
\mult_x\mu=\mult_{i(x)} i_*\mu.
\end{equation}
%
%\beginlma}
Assume that $\mu=\gamma\w \mu'$,where $\mu,\mu'\in\B(U)$ and $\gamma$ is smooth and has positive degree. 
Then $\mult_x\mu =0$.

\subsection{Effective generalized cycle classes}
In \cite[Section~2.4]{aeswy2} was introduced the notion of effective generalized cycle class $\mu\in \B(X)$
generalizing the notion of effective cycle.  It means precisely that $\mu$ has a representative $\hat\mu\in\GZ(X)$
that is a positive current. 
Effective generalized cycle classes have non-negative multiplicities at each point.

\subsection{The cohomology groups $\widehat H^{*,*}(X)$}\label{coho}
We define $\widehat H^{*,*}(X)$ as the vector space of closed $(*,*)$-currents of order $0$ modulo
the subspace generated by all 
$d\tau$ for  currents $\tau$ of order $0$ such that also $d\tau$ has order $0$,
cf.~\cite[Section~10]{aeswy1}.

\smallskip
\noindent
If $f\colon X\to X'$ is proper and $n'=\dim X'$, then we have natural mappings
$f_*\colon \widehat H^{n-*,n-*}(X)\to \widehat H^{n'-*,n'-*}(X')$.

\smallskip
\noindent
If $X$ is smooth, then $\widehat H^{*,*}(X)$ is naturally isomorphic to the usual cohomology
groups   $H^{*,*}(X,\C)$.

\smallskip
\noindent
For each $k$ there is a natural mapping $r_k\colon \Zy_k(X)\to \widehat H^{n-k,n-k}(X)$ 
that takes $\mu\in\Zy_k(X)$ to its Lelong current $[\mu]$.  This mapping extends to a mapping
$\B_k(X)\to \widehat H^{n-k,n-k}(X)$.  

\smallskip
\noindent
Each $\mu\in \Zy_k(X)$ defines an element in the Chow group $\A_k(X)$ and the mapping $r_k$
induces a mapping $\A_k(X)\to \widehat H^{n-k,n-k}(X)$.

\subsection{The $\B$-Segre class}\label{sklass}
Assume that $\J$ is a coherent ideal sheaf on $X$ with zero set $Z$. Also assume 
that  $\J$ is generated by a holomorphic section $\sigma$ of a Hermitian 
vector bundle $E\to X$.  That is, $\J$ is locally generated by the tuple of holomorphic functions
obtained when $\sigma$ is expressed in a local frame of $E$.  
Such a section $\sigma$ exists if $X$ is projective. 
%et us also assume
%thatThis is the case when $X$ is projective.   
For any $\mu\in\B(X)$, following \cite[Section~5]{aeswy1}, let
\begin{equation}\label{taket0}
M^\sigma_k\w \hat\mu=\1_Z(dd^c\log|\sigma|^2)^k\w\hat \mu:=
\1_Z \lim_{\epsilon\to 0} (dd^c\log(|\sigma|^2+\epsilon))^k\w\hat \mu, \quad k=0,1,\ldots,
\end{equation}
where $\hat\mu$ is a representative of the class $\mu$. The existence of the limit is highly non-trivial
and relies on a resolution of singularities. Then $M_k^\sigma\w\hat\mu$ defines a class 
$S_k(\J,\mu)$ in $\B_\ell(X)$, $\ell=\dim\mu-k$,
that only depends on $\mu$ and $\J$.  Clearly
$\mu\mapsto S_k(\J,\mu)$ is $\Z$-linear.  Let 
$S(\J,\mu)=S_0(\J,\mu)+\cdots +S_{\dim\mu} (\J,\mu)$.

\smallskip
\noindent
Let $M^\sigma\w\hat\mu= M_0^\sigma\w\hat\mu+\cdots+M_{\dim\hat\mu}^\sigma\w\hat\mu$.
If  $f\colon X\to X'$ is proper, $\J$ is an ideal sheaf on $X'$, and $\mu\in\B(X)$, then 
$f_* (M^{f^*\sigma}\w\hat\mu) = M^\sigma \w f_* \hat\mu$
and hence\footnote{In this note $f^*\J$ denotes the sheaf over $X$ generated by
 pullbacks of sections of $\J$.}
\begin{equation}\label{trottoar}
f_* S(f^*\J, \mu)=S(\J, f_*\mu).
\end{equation}
%For the definition of $S(\J,\mu)$ when we have no access to a section $\sigma$ as above, see \cite[Section 2.5]{aeswy2}.

\smallskip
If $\J$ is locally a complete intersection, that is, defines a regular embedding, then one can define the Segre classes
$S(\J,\mu)$ without a section $\sigma$ as above. We show this in Section~\ref{di} below in the case that
$\J$ is the ideal sheaf of a submanifold $V\subset X$. 
%See the proof of Theorem~\ref{main1} above, 
%where this is done for the ideal sheaf $\J$ that defines the diagonal in $Y\times Y$, where $Y$
%is a manifold.  

\subsection{Segre numbers}
Given a coherent ideal sheaf $\J\to X$ with zero set $Z$, and $\mu\in\B(X)$, there are, at each point $x$,
non-negative integers $e_k(\J,X,x)$ for $k=0,1,\ldots,\dim Z$, called the Segre numbers. They were 
introduced
independently by Tworzewski, \cite{Twor},  and Gaffney-Gassler, \cite{GG}, 
as the multiplicity of the component of codimension $k$ of a generic local SV-cycle in $\J_x$.
A purely algebraic definition was introduced in \cite{AM} and the
equivalence to the geometric definition was proved in \cite{AR}.  
If $Z$ is a point, then the Segre number is precisely the Hilbert-Samuel multiplicity.
In \cite{aswy} was introduced an analytic definition. 

Given $\mu\in\B(X)$ we have the integers
\begin{equation}\label{taket}
e_k(\J,\mu,x):=\mult_x S_k(\J, \mu),
\end{equation}
 %cf.~\cite[Definition~2.4]{aeswy1},
that are called the {\it Segre numbers of $\J$ on $\mu$} in \cite[Section~2.6]{aeswy1}.
If $\sigma$ is a section of a Hermitian vector bundle that defines $\J$ and $\hat\mu$ is a representative
of $\mu$, then 
\begin{equation}\label{taket22}
e_k(\J,\mu,x)=\mult_x M_k^\sigma\w\hat\mu. 
\end{equation}
Locally we can choose $\sigma$ and the (smooth) Hermitian metric so that $\log|\sigma|^2$ is 
plurisubharmonic.  If follows from \eqref{taket22} and \eqref{taket0}, and the Skoda-El~Mir theorem, that the Segre numbers
$e_k(\J,\mu,x)$ are non-negative if $\mu$ is effective.  We have that $e_k(\J,X,x)=e_k(\J,\1_X,x)$,
see \cite{aeswy1}.

\subsection{Local intersection numbers}\label{tyken}
Let $X$ be smooth, assume that $\mu_1,\mu_2\in \B(X)$ have pure dimensions,
and let $d=\dim\mu_1+\dim\mu_2$.
Furthermore, let  $\J_\Delta$ be the sheaf that defines the diagonal $\Delta$ in $X\times X$ and let
$j\colon X\to X\times X$ be the natural parametrization.   We define the 
{\it local intersection numbers}
\begin{equation}\label{taket12}
\epsilon_\ell(\mu_1,\mu_2,x)= e_{d-\ell}\big(\J_\Delta,\mu_1\times\mu_2, j(x)\big), \quad \ell=0,1,\ldots,
\end{equation}
saying that $\epsilon_\ell(\mu_1,\mu_2,x)$ is the local intersection number at dimension $\ell$.
These numbers are biholomorphic invariants, and if we have an embedding $i\colon X\to X'$
in a larger manifold $X'$, then it follows from \eqref{trottoar}, \eqref{taket} and \eqref{taket12} that
\begin{equation}\label{sukta}
\epsilon_\ell(\mu_1,\mu_2,x)=\epsilon_\ell(i_*\mu_1, i_*\mu,i(x))
\end{equation}
for each $x\in X$.

\section{The  $\di_Y$-product}\label{di}
Let $Y$ be a complex manifold, let
%$\Delta$ be the diagonal in $Y\times Y$, and let
$j\colon Y\to Y\times Y$ be the natural parametrization of the diagonal $\Delta$ in $Y\times Y$, and let $\J_\Delta$ be the corresponding ideal sheaf.
To define the $\di_Y$-product $\mu_1\di_Y\mu_2$ for $\mu_1,\mu_2\in\B(Y)$ we need a definition of $S(\J_\Delta, \mu_1\times \mu_2)$
when $\J_\Delta$ is not necessarily generated by a global holomorphic section of a Hermitian vector bundle $E\to Y\times Y$.

\smallskip

Let $X$ be a complex manifold, $i\colon V\to X$ a submanifold, and $\J_V$ the corresponding ideal sheaf. 
Recall that if there is a holomorphic section $\sigma$ of a vector bundle $E\to X$ such that $\sigma$ generates $\J_V$, then there is
an embedding $N_V\hookrightarrow E|_V$; see, e.g., \cite[Lemma~7.3]{aeswy1}.

\begin{prop}\label{lastbil}
Assume that the normal bundle $N_V X\to V$ is equipped with a Hermitian metric.
For any $\hat\mu\in\mathcal{GZ}(X)$ and $k=0,1,2,\ldots$, there is a generalized cycle 
$\widehat S_k(\J_V,\hat\mu)\in\mathcal{GZ}_{\text{dim}\,\hat\mu-k}(X)$
with the following properties. 

\smallskip

\noindent (i) If $U\subset X$ is open and $\sigma$ is a holomorphic section of a Hermitian vector bundle $E\to U$ such that $\sigma$
generates $\J_V$ in $U$ and the embedding $N_V U\hookrightarrow E|_{V\cap U}$ is an embedding of Hermitian vector bundles, then
$\widehat S_k(\J_V,\hat\mu) = M_k^\sigma\wedge\hat\mu$ in $U$.

\smallskip

\noindent (ii) The image of $\widehat S_k(\J_V,\hat\mu)$ in $\mathcal{B}_{\text{dim}\,\hat\mu-k}(X)$ only depends on the image of $\hat\mu$ 
in $\mathcal{B}(X)$;
in particular it is independent of the Hermitian metric on $N_V X\to V$.
\end{prop}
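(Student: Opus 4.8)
The plan is to construct $\widehat S_k(\J_V,\hat\mu)$ directly on the blow-up $\pi\colon\widetilde X\to X$ of $X$ along $V$. Since $V$ is a submanifold, $\widetilde X$ is a manifold, the exceptional divisor $D=\pi^{-1}(V)$ is the projectivization of $N_V X$ with $q\colon D\to V$, and $\O_{\widetilde X}(D)|_D=N_D\widetilde X$ is the corresponding tautological line bundle; in particular the Hermitian metric on $N_V X$ induces, via $q$ and this tautological inclusion, a metric on $N_D\widetilde X$, which we extend to a Hermitian metric $h$ on $\O_{\widetilde X}(D)$. Let $s_D$ be the canonical section of $\O_{\widetilde X}(D)$ with divisor $D$; then $\pi^*\J_V=\O_{\widetilde X}(-D)$ is generated by $s_D$, so by the $\B$-Segre construction of \cite[Section~5]{aeswy1} the currents $M^{s_D}_k\w\hat\nu=\1_D(\ddc\log|s_D|^2)^k\w\hat\nu$ are well-defined generalized cycles on $\widetilde X$ for every $\hat\nu\in\GZ(\widetilde X)$. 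Given $\hat\mu\in\GZ(X)$, I would choose a lift $\widetilde\mu\in\GZ(\widetilde X)$ with $\pi_*\widetilde\mu=\hat\mu$ — writing $\hat\mu$ as a finite sum of terms $\tau_*\alpha$ and, by Hironaka, resolving the maps $\tau$ so that they factor through $\pi$, the sum of the resulting terms on $\widetilde X$ does the job — and set $\widehat S_k(\J_V,\hat\mu):=\pi_*\big(M^{s_D}_k\w\widetilde\mu\big)$, a generalized cycle on $X$ of dimension $\dim\hat\mu-k$.

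\textbf{Well-definedness.} I then have to check independence of the choices of $\widetilde\mu$ and of the extension $h$. For the lift: if $\pi_*\widetilde\mu=\pi_*\widetilde\mu'=\hat\mu$ then $\widetilde\nu:=\widetilde\mu-\widetilde\mu'$ has $\pi_*\widetilde\nu=0$; since $\pi$ is a biholomorphism over $X\setminus V$ and the irreducible components of a generalized cycle have distinct Zariski supports, this forces $|\widetilde\nu|\subseteq D$. Representing $\widetilde\nu$ by a sum of terms $\tau_*\alpha$ with each $\tau$ mapping into $D$ gives $\tau^*s_D\equiv0$, hence $\tau^*\ddc\log(|s_D|^2+\epsilon)=\ddc\log\epsilon=0$, so $M^{s_D}_k\w\widetilde\nu=0$ for $k\ge1$, while $M^{s_D}_0\w\widetilde\nu=\widetilde\nu$ and $\pi_*\widetilde\nu=0$; in all cases $\pi_*(M^{s_D}_k\w\widetilde\nu)=0$. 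For the metric: $M^{s_D}_k\w\widetilde\mu=\1_D(\ddc\log|s_D|^2_h)^k\w\widetilde\mu$ should depend on $h$ only through $c_1(\O_{\widetilde X}(D),h)|_D$ — the residual mass carried by $D$ involving only $[D]$, this restricted Chern form, and $\widetilde\mu$, by the residue calculus of \cite[Section~5]{aeswy1} — and since the restriction to $D$ of $\ddc$ of a function depends only on that function's restriction to $D$, one has $c_1(\O_{\widetilde X}(D),h)|_D=c_1(\O_{\widetilde X}(D)|_D,h|_D)$, which depends only on $h|_D$. Thus $\widehat S_k(\J_V,\hat\mu)$ depends only on the given metric on $N_V X$.

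\textbf{Property (i).} Let $U,E,\sigma$ be as in (i). Since $V$ is smooth, $\pi^*\sigma$ vanishes to order exactly one along $D$, so over $\pi^{-1}(U)$ I can write $\pi^*\sigma=\widetilde\sigma\otimes s_D$ with $\widetilde\sigma$ a nowhere vanishing holomorphic section of $\pi^*E\otimes\O_{\widetilde X}(-D)$. Equip $\O_{\widetilde X}(-D)|_{\pi^{-1}(U)}$ with the metric making $|\widetilde\sigma|\equiv1$; the hypothesis that $N_VU\hookrightarrow E|_{V\cap U}$ is an isometric embedding translates precisely into the statement that the restriction to $D$ of this metric is the one induced by the metric on $N_V X$. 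Then $\log|\pi^*\sigma|^2=\log|s_D|^2_{h''}$ for a Hermitian metric $h''$ on $\O_{\widetilde X}(D)|_{\pi^{-1}(U)}$ with $h''|_D$ the $N_V X$-induced metric, and the push-forward formula $\pi_*(M^{\pi^*\sigma}\w\widetilde\mu)=M^\sigma\w\hat\mu$ (stated just before \eqref{trottoar}) together with the metric independence above yields $M^\sigma_k\w\hat\mu=\pi_*(M^{s_D}_k\w\widetilde\mu)=\widehat S_k(\J_V,\hat\mu)$ over $U$. Such a $\sigma$ exists locally — e.g.\ $\sigma=(z_1'',\dots,z_d'')$ in coordinates with $V=\{z''=0\}$, with a suitable Hermitian metric on the trivial bundle $\O^d$ — so the construction is consistent with the local one.

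\textbf{Property (ii), and the main obstacle.} For (ii), the class of $\widehat S_k(\J_V,\hat\mu)$ in $\B(X)$ is $\pi_*$ of the class of $M^{s_D}_k\w\widetilde\mu$ in $\B(\widetilde X)$; since the $\B$-Segre class and $\pi_*$ are well-defined on $\B$ by \cite[Sections~4 and 5]{aeswy1}, and — as in the treatment of the lift above — changing $\widetilde\mu$ over a fixed $\hat\mu$ changes it by a $\widetilde\nu$ with $M^{s_D}_k\w\widetilde\nu$ pushing to zero, this class depends only on the class of $\hat\mu$ in $\B(X)$; replacing the metric on $N_V X$ changes $c_1(\O_{\widetilde X}(D),h)$ by $\ddc$ of a function and hence leaves the $\B$-Segre class, and so $\widehat S_k(\J_V,\hat\mu)$ in $\B(X)$, unchanged. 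I expect the main obstacle to be the metric-dependence step in the well-definedness, namely that $\1_D(\ddc\log|s_D|^2_h)^k\w\widetilde\mu$ is sensitive to $h$ only through $h|_D$: this rests on the residue calculus of \cite{aeswy1} (ultimately on a resolution of singularities) and requires care when $\widetilde\mu$ has components contained in $D$, and, hand in hand with it, the factorization $\pi^*\sigma=\widetilde\sigma\otimes s_D$ with $\widetilde\sigma$ nowhere vanishing together with the translation of the ``isometric embedding'' hypothesis into a condition on $h|_D$. The independence of the lift, by contrast, should be comparatively routine once one knows that $M^{s_D}_k\w\widetilde\nu$ vanishes for $|\widetilde\nu|\subseteq D$ and $k\ge1$.
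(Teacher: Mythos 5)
Your blow-up construction is a viable alternative route: you principalize $\J_V$ once and for all on $\widetilde X$ (the blow-up of $X$ along $V$), lift $\hat\mu$ through $\pi$, and push forward, whereas the paper principalizes $\tau^*\J_V$ on the manifolds $W$ occurring in a representation $\hat\mu=\tau_*\alpha$, defines $\widehat S_k(\J_V,\hat\mu)$ directly by \eqref{bambu}, and obtains independence of the representation from the local identity with $M_k^\sigma\w\hat\mu$. Both arguments ultimately rest on the same two ingredients: the residue formula \cite[Eq.\ (5.9)]{aeswy1} for a principal ideal, and the verification that the hypothesis that $N_V U\hookrightarrow E|_{V\cap U}$ is an embedding of \emph{Hermitian} bundles pins down the induced metric on the relevant exceptional line bundle along the divisor. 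Your lift-existence and lift-independence steps are fine (the latter needs the decomposition of a generalized cycle with Zariski support in $D$ into terms $\tau_*\alpha$ with $\tau$ mapping into $D$, which is available from the irreducible decomposition in \cite{aeswy1}). However, as written there are concrete gaps.

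First, in (ii) you conflate independence of the lift of a \emph{fixed} representative with independence of the representative within its class in $\B(X)$. Knowing that $S_k(\J_D,\cdot)$ and $\pi_*$ descend to $\B(\widetilde X)$, and that two lifts of the same $\hat\mu$ give the same push-forward, does not yet show that $\hat\mu\sim\hat\mu'$ in $\B(X)$ forces equal outputs: since $\pi_*$ is not injective on $\B(\widetilde X)$, you must in addition lift the $\B(X)$-equivalence itself, i.e.\ check that each generator of the relation submodule on $X$ can be written as $\pi_*$ of a relation on $\widetilde X$ (this can be done by the same Hironaka factorization you use to produce $\widetilde\mu$, but it has to be said). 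Second, the two claims you defer to ``residue calculus'' are precisely where the content of the proposition lies: (a) that $\1_D(\ddc\log|s_D|^2_h)^k\w\widetilde\mu$ depends on $h$ only through $h|_D$ --- note that ``the restriction to $D$ of $\ddc$ of a function depends only on its restriction to $D$'' is not a justification, since the Monge--Amp\`ere limit is computed in a neighborhood of $D$, not on $D$; the correct argument is to apply \cite[Eq.\ (5.9)]{aeswy1} to a representation $\widetilde\mu=g_*\beta$ with $g^*\J_D$ divisorial and observe that in the resulting formula the metric enters only along $g^{-1}(D)\subset g^{-1}(|D|)$, with the components of $\widetilde\mu$ inside $D$ handled separately as in your lift-independence step; and (b) that the isometric-embedding hypothesis forces the metric $h''$ on $\O_{\widetilde X}(D)$ obtained from the factorization $\pi^*\sigma=\widetilde\sigma\otimes s_D$ to restrict on $D$ to the $N_VX$-induced metric. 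Claim (b) requires the local computation identifying $\widetilde\sigma|_D$ with the composition of the tautological inclusion $\O_{\widetilde X}(D)|_D\hookrightarrow q^*N_VX$ and the embedding $N_VU\hookrightarrow E|_{V\cap U}$ of \cite[Lemma~7.3]{aeswy1}; this is the analogue, in your picture, of the compatibility analysis with local $\kappa$-tuples of generators that occupies most of the paper's proof, and asserting that it ``translates precisely'' is not a substitute for carrying it out. With these three points supplied, your argument goes through.
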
 

If $\mu\in\mathcal{B}(X)$ we let $S_k(\J_V,\mu)$  be the image in $\mathcal{B}_{\text{dim}\,\mu-k}(X)$ of 
$\widehat S_k(\J_V,\hat\mu)$, where $\hat\mu\in\mathcal{GZ}(X)$ is any representative of $\mu$.
 
\begin{proof}
The proof is based on some ideas in \cite{HSKpb}.
Let $\hat\mu\in\mathcal{GZ}(X)$ and assume that $\hat\mu=\tau_*\alpha$, where $\tau\colon W\to X$ is a proper holomorphic mapping and 
$\alpha$ is a product of first Chern forms of Hermitian line bundles on $W$. We can assume that $\tau^*\J_V$ is principal and that $W$ is smooth.
Consider the commutative diagram
\begin{equation*}
\xymatrix{
D  \ar[d]^t \ar[r]^\iota & W \ar[d]^\tau\\
V \ar[r]^i & X,
}
\end{equation*}
where $D$ is the divisor of $\tau^*\J_V$. Let $L\to W$ be the line bundle corresponding to $D$; for future reference we recall that $L|_D$ 
is the normal bundle of $D$. We will show below that the Hermitian metric on $N_V X$ induces a metric on $L|_D$. 
%Assume that $L$ is equipped with a Hermitian metric
Let $\omega=\hat c_1(L|_D^*)$ be the first Chern form of the dual bundle. Then 
\begin{equation}\label{bambu}
\tau_*([D]\wedge \omega^{k-1}\wedge\alpha) = i_*t_*\iota^*(\omega^{k-1}\wedge \alpha)
\end{equation} 
is in $\mathcal{GZ}(X)$. This will be our definition of $\widehat{S}_k(\J_V,\hat\mu)$. However, a priori this definition
depends on the representation $\tau_*\alpha$ of $\hat\mu$.

Let us now describe the induced metric on $L|_D$.
Let $\kappa=\text{codim}\, V$. We recall the following ad hoc definition; cf. \cite[Section~7]{aeswy1}. 
A section $\xi$ of $N_V X$ is a choice of a $\kappa$-tuple $\xi(s)$ locally on $V$ for each local holomorphic $\kappa$-tuple $s$ generating $\J_V$
such that $\xi(Ms)=M\xi(s)$ on $V$ for any locally defined holomorphic matrix $M$ invertible in a neighborhood of $V$.

Assume that $U$, $\sigma$, and $E$ are as in (i). Assume also that $s$ is a holomorphic $\kappa$-tuple generating $\J_V$ in
an open set $U'$. In view of the definition of a section of $N_V X$ above, 
if we consider $s$ as a section of the trivial rank $\kappa$ bundle $F\to U'$, then we can identify
$F|_{V\cap U'}$ with $N_V U'$. Notice that this identification induces a Hermitian metric on $F|_{V\cap U'}$; 
we extend it to a Hermitian metric on $F$ in an arbitrary way.
Since both $s$ and $\sigma$ generate $\J_V$ in $U\cap U'$ there is a holomorphic $A\in \text{Hom}(F,E)$ in
$U\cap U'$ such that $\sigma=As$. In $V\cap U\cap U'$, the embedding $N_V U\hookrightarrow E|_{V\cap U}$, which by assumption is an embedding of
Hermitian bundles, is then realized by $A|_{V}$;
cf.\ \cite[Lemma~7.3]{aeswy1}. 

Let $U''=\tau^{-1}(U\cap U')$. In $D\cap U''$ we get that $a:=\tau^*A|_{D\cap U''}$ embeds $\tau^* N_V U$ in $\tau^* E|_D$.
Moreover, we have a similar situation in $\tau^{-1}(U')$ and in $\tau^{-1}(U)$ as we had in $U\cap U'$ since the ideal sheaf $\tau^*\J_V$,
which defines $D$, 
is generated by $\tau^*s$ in $\tau^{-1}(U')$ and $\tau^*\sigma$ in $\tau^{-1}(U)$. 
In the same way as above, since $L|_D$ is the normal bundle of $D$, we thus get embeddings
\begin{equation*}
L|_{D\cap \tau^{-1}(U')}\hookrightarrow \tau^*F|_{D\cap \tau^{-1}(U')}=\tau^* N_V U' \quad \text{and}\quad 
L|_{D\cap \tau^{-1}(U)}\hookrightarrow \tau^*E|_{D\cap \tau^{-1}(U)}.
\end{equation*}
Using the ad hoc definition of a section of a normal bundle it is straightforward to check that the latter embedding restricted to $U''$
is the composition of
\begin{equation*}
L|_{D\cap U''}\hookrightarrow \tau^*F|_{D\cap U''} \stackrel{a}{\hookrightarrow} \tau^*E|_{D\cap U''}.
\end{equation*}
It follows that the metrics induced on $L|_D$ by the embeddings in $\tau^*F$ and $\tau^*E$, respectively, coincide on $L|_{D\cap U''}$. 
In particular, if $\sigma$ is a holomorphic $\kappa$-tuple generating $\J_V$ in $U$, so that $E|_{V\cap U}$ can be identified with $N_V U$, 
it follows that the metric on $N_V X$ induces a metric on 
$L|_D$.

With this metric on $L|_D$, $M_k^\sigma\wedge\hat\mu$ equals the left-hand side of \eqref{bambu} in $U$ by \cite[Eq.\ (5.9)]{aeswy1}. 
In view of \eqref{taket0}, $M_k^\sigma\wedge\hat\mu$ is independent of the representation $\tau_*\alpha$ of $\hat\mu$. It follows that 
\eqref{bambu} is independent of the representation $\tau_*\alpha$ of $\hat\mu$, and we take \eqref{bambu} as our definition of
$\widehat{S}_k(\J_V,\hat\mu)$. Then $M_k^\sigma\wedge\hat\mu=\widehat{S}_k(\J_V,\hat\mu)$ in $U$ and (i) is proved.

We now note that (ii) follows. Indeed, in view of \cite[Section~3]{aeswy1}, the image of \eqref{bambu} in $\mathcal{B}(X)$ 
is $0$ if $\hat\mu$ is $0$ in $\mathcal{B}(X)$ and, moreover, it is independent of 
the Hermitian metric on $L$.
%To prove the proposition it now suffices to show that the Hermitian metric on $N_V X$ induces a metric on $L|_D$ %gives a Chern form $\hat c_1(L^*)$ 
%and that part
%(i) of the proposition holds. Indeed, if we define $\widehat S_k(\J_V,\hat\mu)$ to be \eqref{bambu} and (i) holds, then in view of \eqref{taket0}, 
%$\widehat S_k(\J_V,\hat\mu)$ only depends on $\hat\mu$ and the Hermitian metric on $N_V X$. 
%Moreover, by \cite[Theorem~5.2]{aeswy1},
%if $\hat\mu$ is $0$ in $\mathcal{B}(X)$, then $\widehat S_k(\J_V,\hat\mu)$ is $0$ in $\mathcal{B}(X)$.
\end{proof}

Let $N_\Delta (Y\times Y)\to \Delta$ be the normal bundle.
\begin{df} 
Given $\mu_1,\mu_2\in\B(Y)$, then 
$\mu_1\di_Y\mu_2$ is the unique element in $\B(Y)$ such that
$$
j_*(\mu_1\di_Y\mu_2)= c(N_\Delta (Y\times Y))\w S(\J_\Delta, \mu_1\times \mu_2).
$$
\end{df}

If we identify $\Delta$ and $Y$, then 
$N_\Delta (Y\times Y)$ is isomorphic to $TY$  
so if we identify $S(\J_\Delta, \mu_1\times \mu_2)$ with an element in $\B(Y)$, then
%we have 
\begin{equation}\label{apa1}
\mu_1\di_Y \mu_2=c(TY)\w S(\J_\Delta, \mu_1\times \mu_2).
\end{equation}

 For the proof of Theorem~\ref{main1} we need the following lemma.
 Recall that a coherent ideal sheaf $\J\to X$, with zero set $Z$, on a reduced space $X$ of pure dimension defines a {\it regular embedding} of
 codimension $\kappa$ if $\codim Z=\kappa$ and locally $\J$ is generated by $\kappa$ functions.  Then 
 there is a well-defined normal bundle $N_{\J} X$ over $Z$.
 See, e.g., \cite[Section~7]{aeswy1}.
 
 \begin{lma}\label{krokant}
 Let $X'$ be a reduced space and let $\iota\colon X\to X'$ be  a reduced subspace.  Assume that 
the coherent
 sheaf $\J'\to X'$ defines a regular embedding of codimension $\kappa$ in $X'$, 
 and that $\J=\iota^*\J'$ defines a regular embedding of codimension $\kappa$  in $X$. 
 Then $N_{\J} X=\iota^* N_{\J'}X'$. 
 \end{lma}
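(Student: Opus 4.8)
The plan is to trivialize both bundles locally over the zero set $Z$ of $\J$ and compare their transition cocycles. Since the assertion is local over $Z$ and $Z=\iota^{-1}(Z')$, where $Z'$ is the zero set of $\J'$ — in particular $Z\subseteq Z'$ after identifying $X$ with $\iota(X)$ — I would fix a point of $Z$ and a neighborhood $U'$ of it in $X'$ on which $\J'$ is generated by a holomorphic $\kappa$-tuple $t=(g_1,\dots,g_\kappa)$; such $t$ exists because $\J'$ defines a regular embedding of codimension $\kappa$. Put $U=U'\cap X$. Then $\iota^*t=(\iota^*g_1,\dots,\iota^*g_\kappa)$ generates $\J=\iota^*\J'$ on $U$, and, by the hypothesis that $\J$ defines a regular embedding of codimension $\kappa$, we have $\codim_X Z=\kappa$; hence $\iota^*t$ is an admissible generating $\kappa$-tuple for the normal-bundle construction on $X$ near this point.

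Next I would recall, from \cite[Section~7]{aeswy1} and the ad hoc description used in the proof of Proposition~\ref{lastbil}, how these normal bundles are assembled: a generating $\kappa$-tuple $s$ of $\J$ near a point of $Z$ determines a holomorphic frame $e_s$ for $N_{\J}X$ over $Z$, and two generating $\kappa$-tuples $s$ and $s'=Ms$ — with $M$ a holomorphic matrix invertible in a neighborhood of $Z$ — give frames related by the transition matrix $M|_Z$; likewise for $N_{\J'}X'$ over $Z'$. Thus the chosen $t$ yields a frame $e_t$ of $N_{\J'}X'$ over $U'\cap Z'$, hence by restriction a frame of $\iota^*N_{\J'}X'$ over $U\cap Z$, while $\iota^*t$ yields a frame $e_{\iota^*t}$ of $N_{\J}X$ over $U\cap Z$. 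Define the local bundle map $\Phi\colon\iota^*N_{\J'}X'\to N_{\J}X$ over $U\cap Z$ by sending the restriction of $e_t$ to $e_{\iota^*t}$.

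It then remains to check that $\Phi$ is independent of the auxiliary tuple $t$ and so patches to a global isomorphism over $Z$. If $\tilde t=Mt$ is another generating $\kappa$-tuple of $\J'$ on an overlapping open set, then $\iota^*\tilde t=(\iota^*M)(\iota^*t)$ on $X$, and $\iota^*M$ is holomorphic and invertible near $\iota^{-1}(Z')=Z$ because $\det(\iota^*M)=\iota^*(\det M)$ is nonvanishing there. On the $X'$ side the restricted frames of $t$ and $\tilde t$ are related by $\iota^*(M|_{Z'})$; on the $X$ side the frames $e_{\iota^*t}$ and $e_{\iota^*\tilde t}$ are related by $(\iota^*M)|_Z$. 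Since $Z\subseteq Z'$ these matrices coincide, so the two descriptions of $\Phi$ agree on overlaps. Hence $\Phi$ is a well-defined global bundle map over $Z$, and it is an isomorphism since it carries a local frame to a local frame; this gives $N_{\J}X=\iota^*N_{\J'}X'$.

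The cocycle comparison is routine; the one place where a hypothesis is genuinely used is in ensuring $\codim_X Z=\kappa$, which is precisely what makes $\iota^*t$ a legitimate (minimal) generating $\kappa$-tuple so that $e_{\iota^*t}$ is an honest frame — without the assumption that $\J=\iota^*\J'$ also defines a regular embedding of codimension $\kappa$, the pulled-back tuple could fail to be minimal and $e_{\iota^*t}$ would not be a frame. The only other point worth isolating is the identity $\iota^*(M|_{Z'})=(\iota^*M)|_Z$, which is immediate from $Z=\iota^{-1}(Z')$.
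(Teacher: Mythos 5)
Your argument is correct and is essentially the paper's own proof: both identify the bundles by comparing transition matrices, using that a local generating $\kappa$-tuple $t$ of $\J'$ pulls back to a minimal generating $\kappa$-tuple of $\J$ (this is where the codimension hypothesis on $X$ enters) and that the cocycle $\iota^*a(t,\tilde t)$ obtained by restricting to $Z$ is simultaneously a transition cocycle for $\iota^*N_{\J'}X'$ and for $N_{\J}X$. Your extra packaging via the frame map $\Phi$ and the explicit check $\iota^*(M|_{Z'})=(\iota^*M)|_Z$ only spells out what the paper leaves implicit.
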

 
 Let $Z$ and $Z'$ denote the zero sets of $\J$ and $\J'$, respectively.

 \begin{proof}
By assumption, locally we have a set of generators $s=(s_1,\ldots, s_\kappa)$ for $\J'$. If $s'$ is another
such $\kappa$-tuple, then (on the overlap) there is an invertible holomorphic $\kappa\times\kappa$ matrix
$a(s,s')$ such that $s'=a(s,s') s$. The matrices so obtained form the transition matrices on $Z'$ for the
bundle $N_{\J'}X'$.   Now the lemma follows by noting that $\iota^*s$ and $\iota^*s'$ are minimal sets
of generators for $\J=\iota^*\J'$ and hence
$\iota^* a(s,s')$ are transition matrices for $N_{\J}X\to Z$. 
 \end{proof}

\begin{proof}[Proof of Theorem~\ref{main1}]

It is clear that $\mu_1\di_Y\mu_2$ is $\Z$-bilinear and commutative since $S(\J_\Delta,\mu_1\times\mu_2)$
is, cf.~Section~\ref{sklass}.
Moreover, its Zariski support is contained in $\Delta\cap (|\mu_1|\times|\mu_2|)$ which after
identifying $\Delta$ and $Y$ is precisely $V=|\mu_1|\cap|\mu_2|$.  Thus (i) holds.

It follows from  \eqref{taket} and \eqref{taket12} that
\begin{equation}\label{tosca}
\epsilon_\ell(\mu_1,\mu_2,x)=
\mult_{j(x)} S_k(\J_\Delta, \mu_1\times\mu_2)
\end{equation}
where  $k=\dim\mu_1+\dim\mu_2-\ell$. 
Since  $c(N_\Delta (Y\times Y))=1+\cdots$, where $\cdots$ are smooth forms of positive bidegree,
it follows from the dimension principle that 
\begin{equation}\label{tosca2}
j_*(\mu_1\di_Y\mu_2)_\ell=\big( c(N_\Delta (Y\times Y))\w S(\J_\Delta, \mu_1\times\mu_2)\big)_\ell=
S_k(\J_\Delta, \mu_1\times\mu_2) +\cdots,
\end{equation}
where $\cdots$ are smooth forms of positive degree times generalized cycle classes.  Now (ii) follows
from \eqref{tosca}, \eqref{tosca2} and the comment after \eqref{tosca50}.

\smallskip
We now prove (iii). Assume that $\mu_j$ are cycles that intersect properly.  Then $\Delta$ intersects $X:=\mu_1\times\mu_2$
properly so that if $\iota\colon X\to Y\times Y$, then $\J:=\iota^*\J_\Delta$ defines a regular embedding in
$X$.  In view of Lemma~\ref{krokant},  \eqref{trottoar} and \eqref{bus} we have, using the notation 
$S(\J,\1_X)=S(\J,X)$, 
\begin{equation}\label{foton}
j_*(\mu_1\di_Y\mu_2)=c(N_{\J_\Delta} (Y\times Y))\w S(\J_\Delta,X)=\iota_*\big(c(N_\J X)\w S(\J,X)\big)=
\iota_*[Z_\J],
\end{equation}
where the last equality is precisely \cite[Theorem~1.4]{aeswy1}. Here $[Z_\J]$ is the
Lelong current of the fundamental cycle associated with $\J$. Its Zariski support is precisely $Z$ but
there is a certain multiplicity of each irreducible component of $Z$. 
Since the right hand side of \eqref{foton} has the expected dimension
$\rho$, cf.~\eqref{rho}, \eqref{foton} implies that 
\begin{equation}\label{foton2}
j_*(\mu_1\di_Y\mu_2)_\rho=%c(N_{\J_\Delta} (Y\times Y))\w S(\J_\Delta,X)=\iota_*\big(c(N_\J X)\w S(\J,X)\big)=
\iota_*[Z_\J].
\end{equation}
Furthermore,
$(\mu_1\di_{Y}\mu_2)_\rho=\mu_1\cdot_{\B(Y)}\mu_2$,
where $\mu_1\cdot_{\B(Y)}\mu_2$ is the product from \cite[Section~5]{aeswy2}.
%Moreover, 
%\begin{equation}\label{polyp}
%(\mu_1\di_{Y}\mu_2)_\rho=\mu_1\cdot_{\B(Y)}\mu_2,
%\end{equation}
%where $\mu_1\cdot_{\B(Y)}\mu_2$ is the product from \cite[Section~5]{aeswy2}.
Since $\mu_j$ intersect properly, this product is equal to $\mu_1\cdot_Y\mu_2$
by \cite[Proposition~5.8~(i)]{aeswy2}. Thus 
\begin{equation}\label{polyp}
\mu_1\cdot_Y\mu_2=(\mu_1\di_Y\mu_2)_\rho.
\end{equation}
Now (iii) follows from \eqref{foton}, \eqref{foton2} and \eqref{polyp}.

\smallskip
We know from \cite[Theorem~1.3]{aeswy2}  that the image of $\mu_1\cdot_{\B(Y)}\mu_2$ 
coincides with the image
of the Chow class $\mu_1\cdot_Y\mu_2$. Thus (iv) follows from \eqref{polyp}. 
This concludes the proof.
\end{proof}

For future reference we include the following simple proposition.

\begin{prop}\label{koks}
Assume that we have an embedding  $i\colon Y\to Y'$ where $Y,Y'$ are smooth. Then 
\begin{equation}\label{anka}
i_*\mu_1\di_{Y'}i_*\mu_2=i_*\big(i^*c(N_{i(Y)}Y')\w \mu_1\di_{Y}\mu_2\big). 
\end{equation}
\end{prop}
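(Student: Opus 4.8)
The plan is to reduce \eqref{anka} to the corresponding identity for Segre classes of diagonal ideals under the embedding, and then apply functoriality of the $\B$-Segre class \eqref{trottoar} together with the Whitney-type behaviour of total Chern classes. Write $\Delta_Y\subset Y\times Y$ and $\Delta_{Y'}\subset Y'\times Y'$ for the diagonals, with ideal sheaves $\J_{\Delta_Y}$, $\J_{\Delta_{Y'}}$, and let $j\colon Y\to Y\times Y$, $j'\colon Y'\to Y'\times Y'$ be the parametrizations. The map $i$ induces $k:=i\times i\colon Y\times Y\to Y'\times Y'$, a proper (indeed closed) embedding, and $k\circ j = j'\circ i$. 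The starting point is the definition \eqref{apa1}, applied on $Y'$:
$$
j'_*(i_*\mu_1\di_{Y'}i_*\mu_2)=c(N_{\Delta_{Y'}}(Y'\times Y'))\w S(\J_{\Delta_{Y'}}, i_*\mu_1\times i_*\mu_2).
$$

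First I would observe that $i_*\mu_1\times i_*\mu_2 = k_*(\mu_1\times\mu_2)$ as elements of $\B(Y'\times Y')$ (this is immediate from the construction of the exterior product in \cite[Lemma~2.1]{aeswy2} and functoriality of push-forward, property (iv) of Section~\ref{kung}). Since $\mu_1\times\mu_2$ has Zariski support on $Y\times Y$, and $k$ is an embedding, the right-hand side above has Zariski support on $k(Y\times Y)=Y'\times\!_{?}$—more precisely on $k(Y\times Y)\cap\Delta_{Y'} = k(\Delta_Y)$; so by property (v) of Section~\ref{kung} the whole expression is the $j'_*$-image (equivalently the $k_*$-image, restricted along $j'$) of something on $Y\times Y$, and it suffices to identify that something. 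The key computational input is the functoriality \eqref{trottoar} of the $\B$-Segre class: since $k^*\J_{\Delta_{Y'}}$ is the ideal sheaf of $k^{-1}(\Delta_{Y'})=\Delta_Y$ in $Y\times Y$ — here one must check $k^*\J_{\Delta_{Y'}} = \J_{\Delta_Y}$, which holds because $\Delta_{Y'}$ meets $k(Y\times Y)$ cleanly with the expected codimension, $\dim Y'$, and the pullback of a regular embedding along a submanifold meeting it properly is again the reduced diagonal ideal (cf.\ Lemma~\ref{krokant}) — we get
$$
S(\J_{\Delta_{Y'}}, k_*(\mu_1\times\mu_2)) = k_* S(k^*\J_{\Delta_{Y'}},\mu_1\times\mu_2)= k_* S(\J_{\Delta_Y},\mu_1\times\mu_2).
$$

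Then I would handle the Chern class factor using the projection formula \eqref{bus}: pulling $c(N_{\Delta_{Y'}}(Y'\times Y'))$ back along $k$ and moving it inside $k_*$,
$$
c(N_{\Delta_{Y'}}(Y'\times Y'))\w k_* S(\J_{\Delta_Y},\mu_1\times\mu_2) = k_*\big(k^*c(N_{\Delta_{Y'}}(Y'\times Y'))\w S(\J_{\Delta_Y},\mu_1\times\mu_2)\big).
$$
Now restrict everything to $\Delta_Y$ and identify $\Delta_Y\cong Y$, $\Delta_{Y'}\cong Y'$ via $j,j'$. Under these identifications $N_{\Delta_{Y'}}(Y'\times Y')\cong TY'$ and $N_{\Delta_Y}(Y\times Y)\cong TY$, and $k$ restricted to $\Delta_Y$ is just $i$. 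The bundle $k^*N_{\Delta_{Y'}}(Y'\times Y')$ restricted to $\Delta_Y$ becomes $i^*TY'$, which fits in the normal bundle sequence $0\to TY\to i^*TY'\to N_{i(Y)}Y'\to 0$; hence by the Whitney formula (applicable at the level of Chern forms after choosing compatible metrics, or at the level of the $c_k(E)$-operators of Section~\ref{kung}(vi) via $c(i^*TY')=c(TY)\,i^*c(N_{i(Y)}Y')$) we get
$$
k^*c(N_{\Delta_{Y'}}(Y'\times Y'))\big|_{\Delta_Y}=c(TY)\w i^*c(N_{i(Y)}Y').
$$
Combining the displays and invoking \eqref{apa1} on $Y$, namely $c(TY)\w S(\J_{\Delta_Y},\mu_1\times\mu_2)=\mu_1\di_Y\mu_2$ (after identifying $\Delta_Y$ with $Y$), yields
$$
i_*\mu_1\di_{Y'}i_*\mu_2 = i_*\big(i^*c(N_{i(Y)}Y')\w(\mu_1\di_Y\mu_2)\big),
$$
which is \eqref{anka}.

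The main obstacle I anticipate is the clean-intersection / pullback-of-ideal-sheaf verification $k^*\J_{\Delta_{Y'}}=\J_{\Delta_Y}$ and the compatibility of the induced Hermitian metrics so that Proposition~\ref{lastbil} and its formula $\widehat S_k=M_k^\sigma\w\hat\mu$ can be used on both sides simultaneously; this is exactly the kind of bookkeeping carried out in the proof of Proposition~\ref{lastbil} and in Lemma~\ref{krokant}, but it has to be redone with $V=\Delta_Y$ sitting inside $X'=Y'\times Y'$ and the submanifold $X=Y\times Y$. Everything else — functoriality \eqref{trottoar}, the projection formula \eqref{bus}, the dimension principle to control spurious positive-degree terms, and the Whitney sum — is routine given the machinery already established in Section~\ref{prel} and in the proof of Theorem~\ref{main1}.
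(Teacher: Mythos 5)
Your proposal is correct and follows essentially the same route as the paper's proof: pull back $\J_{\Delta'}$ along $i\times i$ to get $\J_\Delta$, use the functoriality \eqref{trottoar} of the $\B$-Segre class (with Proposition~\ref{lastbil} covering the case where no global section exists), apply the projection formula \eqref{bus}, and split $i^*c(TY')=c(TY)\w i^*c(N_{i(Y)}Y')$ before invoking \eqref{apa1} on both $Y$ and $Y'$. The only cosmetic difference is that you phrase the Chern class step via the Whitney sum for the normal bundle sequence, while the paper simply multiplies $S(\J_\Delta,\mu_1\times\mu_2)=c(TY)^{-1}\w\mu_1\di_Y\mu_2$ by $i^*c(TY')$; these are the same computation.
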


\begin{proof}
Notice now that if $\Delta$ and $\Delta'$ are the diagonals in $Y\times Y$ and $Y'\times Y'$,
respectively,  then %xx dsince smooth,  
$$
\J_{\Delta}=(i\times i)^* \J_{\Delta'}.
$$
We claim that 
\begin{equation}\label{listbil}
(i\times i)_* S(\J_{\Delta}, \mu_1\times \mu_2)= S(\J_{\Delta'}, i_*\mu_1\times i_*\mu_2).
\end{equation}
If $\J_{\Delta'}$ is generated by a holomorphic section of a Hermitian vector bundle over $Y'\times Y'$, then this follows from
\eqref{trottoar} since 
$
(i\times i)_*(\mu_1\times \mu_2)=i_*\mu_1\times i_*\mu_2
$.
The general case follows since 
\eqref{trottoar}, with $f$ replaced by $i\times i$, $\J=\J_{\Delta'}$, and $\mu=\mu_1\times\mu_2$, still holds
in view of Proposition~\ref{lastbil}.
% Since 
% $
% (i\times i)_*(\mu_1\times \mu_2)=i_*\mu_1\times i_*\mu_2
% $
% it follows from \eqref{trottoar}  that
% $$
% (i\times i)_* S(\J_{\Delta}, \mu_1\times \mu_2)= S(\J_{\Delta'}, i_*\mu_1\times i_*\mu_2).
% $$

Identifying $\B(\Delta)$ and $\B(\Delta')$ with $\B(Y)$ and $\B(Y')$, respectively, we have 
\begin{equation}\label{ponny}
i_*S(\J_{\Delta}, \mu_1\times \mu_2)=S(\J_{\Delta'}, i_*\mu_1\times i_*\mu_2).
\end{equation}
By \eqref{apa1},
$$
S(\J_{\Delta}, \mu_1\times \mu_2)=c(TY)^{-1}\w \mu_1\di_Y\mu_2,
$$
and  multiplying by $i^* c(TY')=c(TY)\w i^*c(N_{i(Y)}Y')$ we get
\begin{equation}\label{ponny3}
i^* c(TY')\w S(\J_{\Delta}, \mu_1\times \mu_2)=i^* c(N_{i(Y)}Y')\w \mu_1\di_{Y}\mu_2.
\end{equation}
Therefore, by \eqref{apa1}, \eqref{ponny}, and \eqref{ponny3}, 
\begin{multline*}
i_*\mu_1\di_{Y'}i_*\mu_2=c(TY')\w S(\J_{\Delta'}, i_*\mu_1\times i_*\mu_2)=
i_*\big( i^*(c(TY') \w S(\J_{\Delta}, \mu_1\times \mu_2)\big)=\\
%i_*\big(c(N_{Y}Y')\w \mu_1\di_{Y}\mu_2\big)=
i_*\big(i^*c(N_{i(Y)}Y')\w \mu_1\di_{Y}\mu_2\big).
\end{multline*}
%
%Applying $i_*$ and using  \eqref{ponny} we get \eqref{anka}.
%%%
\end{proof}

\section{The $\di$ and $\bullet$-products on $\Pk^n$}\label{kongas}
 
We first recall the definition of the $\bullet$-product on $\Pk^n$.  Let $\eta_0,\ldots,\eta_n$ be sections of
$L=\Ok(1)_{\Pk^{2n+1}}$ that define the join diagonal $\Delta_J$
in   $ \Pk^{2n+1}$, cf.~\cite[Section~6]{aeswy2}.   Let $\J_J$ be the sheaf that defines $\Delta_J$.
Let $\eta'_k$ be holomorphic functions that represent $\eta_k$ in a given local frame for
$L$.  Then 
$$
dd^c\log|\eta|^2_\circ:=dd^c\log(|\eta_0'|^2+\cdots +|\eta_n'|^2)
$$
is a well-defined global current. 
For $\mu\in\B(\Pk^{2n+1})$ we define $V_k(\Delta_J,L, \mu)$ %= 
as the classes in $\B(\Pk^{2n+1})$ defined by
\begin{equation}\label{pork}
M_k^{L,\eta}\w\hat\mu:=\1_{\Delta_J}(dd^c\log|\eta|^2_\circ)^k\w\hat\mu,  \quad k=0,\ldots, n+1,
\end{equation}
where $\hat\mu$ is a generalized cycle that represents  $\mu$.
It is proved in \cite[Section~4]{aeswy2} that the Monge-Amp\`ere products in
\eqref{pork} are well-defined and that 
$V_k(\Delta_J, L, \mu)$
is independent of the choice of  representative 
$\hat\mu$ and sections $\eta_1,\ldots\eta_n$ defining $\Delta_J$. 
If $k>n+1$ in \eqref{pork}, then  %the Monge-Amp\`ere product in \eqref{pork} vanishes,
%so that 
$V_k(\Delta_J,L,\mu)=0$.  
Let $V(\Delta_J,L,\mu)=V_1(\Delta_J,L,\mu)+V_{2}(\Delta_J,L,\mu)+\cdots.$

If $\mu_1,\mu_2\in\B(\Pk^n)$, then there is a natural class $\mu_1\times_J\mu_2$, see \cite[Section~6]{aeswy2},
in  $\B(\Pk^{2n+1})$, generalizing the usual join when $\mu_1,\mu_2$ are cycles, 
and  $\dim (\mu_1\times_J\mu_2)=\dim\mu_1+\dim\mu_2+1$ if $\mu_1$ and $\mu_2$ have pure dimensions. 
Let $j\colon \Pk^n\to \Pk^{2n+1}$, $[x]\mapsto ([x],[x])$, be the natural  parametrization of $\Delta_J$.

\smallskip 
For $\mu_1,\mu_2\in\B(\Pk^n)$ of pure dimensions  
\begin{equation}\label{tagel}
\mu_1\bullet\mu_2=\sum_{\ell=0}^n (\mu_1\bullet\mu_2)_\ell
\end{equation}
is the class in $\B(\Pk^n)$ defined by 
\begin{equation}\label{taket3}
j_*(\mu_1\bullet\mu_2)_\ell= V_{k(\ell)} (\Delta_J, L, \mu_1\times_J\mu_2),
\end{equation}
where
\begin{equation}\label{stork}
k(\ell)=\dim\mu_1+\dim\mu_2+1 - \ell.
\end{equation}
%%

%Since $k\le n+1$,  $\ell\ge\rho$, and hence the sum actually

\smallskip

Let $\omega_{\Pk^n}$ be the first Chern class of $\Ok(1)\to\Pk^n$, for instance represented by the Fubini-Study
metric form.  If $i\colon W\to \Pk^n$ is a linear subspace, with the induced metric, then
$\omega_W=i^*\omega_{\Pk^n}$.  We will often write $\omega$ without subscript.  
Recall that 
$
\hat c(T\P^n)=(1+\omega)^{n+1}.
$

%In \cite{aeswy2} was introduced a product ablablabla.   From oxet, the proof of noxet, we have
%that

\begin{prop}
Let $\di=\di_{\Pk^n}$
and let
$$
\rho=\dim\mu_1+\dim\mu_2-n, \quad V=|\mu_1|\cap|\mu_2|. 
$$
We have the relations
\begin{equation}\label{potatis}
\mu_1\di \mu_2= 
\sum_{\ell=\max(0,\rho)}^{\dim V}
(1+\omega)^{\ell-\rho}
 (\mu_1\bullet\mu_2)_\ell.
\end{equation}
and  
\begin{equation}\label{potatis2}
\mu_1\bullet \mu_2= 
\sum_{k=0}^{\dim V}
(1-\omega)^{k-\rho}
 (\mu_1\di\mu_2)_k.
\end{equation}
\end{prop}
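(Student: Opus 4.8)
The strategy is to compare both products through their pushforwards to the join diagonal, expressing everything in terms of the classes $V_k(\Delta_J,L,\mu_1\times_J\mu_2)$ and the Segre classes $S_k(\J_\Delta,\mu_1\times\mu_2)$, and then to exploit the two defining formulas \eqref{taket3} and \eqref{apa1} together with the Chern class identity $\hat c(T\Pk^n)=(1+\omega)^{n+1}$. The two relations \eqref{potatis} and \eqref{potatis2} are formally inverse to each other (since $(1+\omega)(1-\omega)=1-\omega^2$, but all higher powers of $\omega$ die for dimension reasons on the relevant supports), so it suffices to prove one of them and then check that the stated inverse is the actual inverse; I would prove \eqref{potatis} directly and derive \eqref{potatis2} from it by a formal inversion argument at the level of the graded pieces.

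\textbf{Key steps.} First I would recall that for $\mu_1,\mu_2\in\B(\Pk^n)$ the diagonal $\Delta\subset\Pk^n\times\Pk^n$ and the join diagonal $\Delta_J\subset\Pk^{2n+1}$ are related: there is a linear embedding realizing $\Pk^n\times\Pk^n$ inside $\Pk^{2n+1}$ (away from the two disjoint $\Pk^n$'s) so that $\Delta$ and $\Delta_J$ correspond, and $\mu_1\times_J\mu_2$ restricts to $\mu_1\times\mu_2$ in the appropriate chart; this is exactly the geometry underlying \cite{aeswy2}, Section~6. The key point is that the Monge--Amp\`ere currents $M_k^{L,\eta}$ defining $V_k(\Delta_J,L,\cdot)$ and the currents $M_k^\sigma$ defining $S_k(\J_\Delta,\cdot)$ agree up to the contribution of $\omega$: on the join, $dd^c\log|\eta|_\circ^2 = [\text{the analogue of }dd^c\log|\sigma|^2] + \omega$ modulo the relevant smoothings, reflecting that $L=\Ok(1)$ contributes a Fubini--Study term. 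Expanding $(dd^c\log|\eta|_\circ^2)^k = \sum_{j}\binom{k}{j}(\cdots)^{k-j}\wedge\omega^j$ and using $\1_{\Delta_J}$ together with the dimension principle (Section~\ref{multi}, the comment after \eqref{tosca50}) to discard terms with a positive-degree smooth factor that lower the dimension below the Zariski support, one gets after pushforward by $j$ a precise relation $V_{k(\ell)}(\Delta_J,L,\mu_1\times_J\mu_2) = \sum (\text{powers of }\omega)\wedge S_{\bullet}(\J_\Delta,\mu_1\times\mu_2)$. Matching degrees via $k(\ell)=\dim\mu_1+\dim\mu_2+1-\ell$ and $k = \dim\mu_1+\dim\mu_2-\ell$ in the $\di$-side, and combining with \eqref{apa1} which multiplies $S(\J_\Delta,\mu_1\times\mu_2)$ by $\hat c(T\Pk^n)=(1+\omega)^{n+1}$, produces \eqref{potatis} after the binomial coefficients $(1+\omega)^{\ell-\rho}$ are collected; the shift $\ell-\rho = \ell - (\dim\mu_1+\dim\mu_2-n) = n+1-k(\ell)$ explains the exponent. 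Finally, for \eqref{potatis2}: since $\omega^{j}\wedge(\mu_1\di\mu_2)_k$ vanishes as soon as $j$ exceeds $n-k$ but more to the point the two transition matrices $(1+\omega)^{\ell-\rho}$ and $(1-\omega)^{k-\rho}$ are inverse to each other on the truncated graded module $\bigoplus_{k\le\dim V}\B_k$ (because $(1+\omega)^a(1-\omega)^a=(1-\omega^2)^a$ and $\omega^2$ acting lowers dimension by $2$, so the off-diagonal corrections telescope to zero on $V$), one inverts \eqref{potatis} formally graded-piece by graded-piece to obtain \eqref{potatis2}.

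\textbf{Main obstacle.} The crux is the precise current-level identity relating $dd^c\log|\eta|_\circ^2$ on the join to the pullback of $dd^c\log|\sigma|^2$ plus $\omega$, and controlling the limits in \eqref{pork} and \eqref{taket0} so that the binomial expansion of the $k$-th power is legitimate; this is where one must invoke the well-definedness results of \cite[Section~4]{aeswy2} and the resolution-of-singularities arguments behind \eqref{taket0}, rather than manipulate currents naively. Once that identity is in hand, the rest is bookkeeping with binomial coefficients, the dimension principle, and the identification $\B(\Delta)\cong\B(\Pk^n)$; the formal inversion for \eqref{potatis2} is then routine linear algebra on the finitely many graded components indexed by $k=0,\dots,\dim V$.
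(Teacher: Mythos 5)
Your overall route---push everything to the join diagonal, compare the Monge--Amp\`ere currents defining $V_k(\Delta_J,L,\cdot)$ with Segre-class currents up to a Fubini--Study term, then invert formally---is the same strategy as the paper's, but the step on which everything hinges is wrong as stated. There is no linear embedding of $\Pk^n\times\Pk^n$ into $\Pk^{2n+1}$ under which $\Delta$ corresponds to $\Delta_J$ and $\mu_1\times_J\mu_2$ restricts to $\mu_1\times\mu_2$: already the dimensions do not match, since $\dim(\mu_1\times_J\mu_2)=\dim\mu_1+\dim\mu_2+1$ while $\dim(\mu_1\times\mu_2)=\dim\mu_1+\dim\mu_2$; the join space $\Pk^{2n+1}$ minus the two vertex $n$-planes is a $\C^*$-bundle over $\Pk^n\times\Pk^n$, not an ambient space containing it linearly. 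Consequently your claimed identity $V_{k(\ell)}(\Delta_J,L,\mu_1\times_J\mu_2)=\sum(\text{powers of }\omega)\w S_\bullet(\J_\Delta,\mu_1\times\mu_2)$ cannot be obtained by a binomial expansion on a common space; bridging the two ambient spaces is precisely the nontrivial content. The paper handles it by quoting \cite[Proposition~7.1]{aeswy2}, which gives $j_*(\mu_1\di\mu_2)=(1+\omega)^{n+1}\w S(\J_J,\mu_1\times_J\mu_2)$ with everything living on $\Pk^{2n+1}$, and then the second van~Gastel-type equality \cite[Corollary~9.9]{aeswy1}, which converts $S(\J_J,\cdot)$ into the Vogel classes $V_k(\Delta_J,L,\cdot)$; your heuristic that $dd^c\log|\eta|^2_\circ$ differs from the metric expression by $\omega$ is indeed the idea behind that corollary, but on the zero set, where $\1_{\Delta_J}$ acts, it is a theorem requiring the machinery of \cite{aeswy1}, not bookkeeping. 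Once these two inputs are in place, $k\le n+1$ forces $\ell\ge\rho$ (via \eqref{stork}), and the dimension principle kills terms of dimension larger than $\dim V$ supported on $V$, giving \eqref{potatis}.

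For \eqref{potatis2} your justification of the inversion is also off: the exponents $\ell-\rho$ and $k-\rho$ vary with the graded piece, so this is not the assertion that two fixed power series multiply to $1$ (and $(1+\omega)^a(1-\omega)^a=(1-\omega^2)^a\neq 1$; no telescoping argument of the kind you sketch is available). The correct statement is the paper's Lemma~\ref{mangel}: the operator $a\mapsto\sum_\ell(1+\omega)^{\ell+r}a_\ell$ on a graded module with $\omega$ lowering degree by one has inverse $b\mapsto\sum_k(1-\omega)^{k+r}b_k$, proved there by the substitution $z\mapsto z+\omega$ in the generating function $\sum_\ell z^{\ell+r}a_\ell$ (or by a short induction). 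This part of your plan is easily repaired by proving such a lemma; note also that the inverted sum must start at $k=0$ rather than at $\max(0,\rho)$, since the terms of \eqref{potatis} generate components in all lower dimensions.
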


Since $k\le n+1$,  $\ell\ge \rho$. Moreover, each term in the sum \eqref{tagel} has support on $V$.  
Hence the sum runs from  $\max(\rho,0)$ to $\dim V$.

\begin{proof}
With the notation in \cite[Section~7]{aeswy2} we have that
$\mu_1\di\mu_2=i^{!!}(\mu_1\times\mu_2)$.  
It follows from \cite[Proposition~7.1]{aeswy2} that
$$
j_*(\mu_1\di\mu_2)=c(N_{\J_J}\Pk^{2n+1})\w S(\J_J,\mu_1\times_J\mu_2)=
(1+\omega)^{n+1}\w S(\J_J,\mu_1\times_J\mu_2).
$$
By the second van~Gastel type equality in \cite[Corollary~9.9]{aeswy1} we get % cf.~the proof of \cite[Theorem~2.1]{aeswy2},
\begin{equation}\label{skal}
j_*(\mu_1\di\mu_2)=\sum_{k\ge 0}(1+\omega)^{n+1-k} \w V_k(\Delta_J,L,\mu_1\times_J\mu_2).
%=\sum_{k\ge 0}(1+\omega)^{n+1-k}\w
\end{equation}
%Here $\omega$ is 
%$\omega_{\Delta_J}$ is $\omega=\omega_{\Delta_J}$.  
Since $k\le n+1$ it follows that $\ell\ge\rho$, cf.~\eqref{stork},  and since all terms in the sum
have Zariski support on $V$,
each term with $\ell$ larger than $\dim V$ must vanish in view of the dimension principle.
%Since $j^*\omega_{\Delta_J}=\omega_{\Pk^n}$  
Hence \eqref{skal} is precisely \eqref{potatis}. 
 
The equality \eqref{potatis2} follows from \eqref{potatis} and Lemma~\ref{mangel} below. Notice that although the
sum in \eqref{potatis} happens to begin at $\ell=\max(\rho,0)$ it will give rise to terms of lower dimension so 
 \eqref{potatis2} must start at $k=0$.
\end{proof}

\begin{lma}\label{mangel}
Assume that $A=\sum_{\ell\ge 0} A_\ell$ is a graded $\C$-algebra and  
$\omega\colon A\to A$ maps $A_{\ell+1}\to A_\ell$, $\ell\ge 0$,  and $A_0\to 0$. 
Moreover, let $r$ be a fixed integer.
Assume that $a=a_0+a_1+\cdots$, where $a_\ell$ are elements in $A_{\ell}$,  and let
$b_k$ be the elements in $A_{k}$ so that  
\begin{equation}\label{klara}
\sum_{k=0}^m b_k=\sum_{\ell=0}^m(1+\omega)^{\ell+r} a_\ell.
\end{equation}
%where $b_k$ has dimension $k+r$, then
 Then 
 $$
\sum_{\ell=0}^m a_\ell=\sum_{k=0}^m(1-\omega)^{k+r} b_k.
$$
\end{lma}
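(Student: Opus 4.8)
The plan is to prove the statement as a formal identity in the (completed) graded algebra $A$, treating $\omega$ as a formal nilpotent-on-each-element operator. The key observation is that $\omega$ lowers degree by one and kills $A_0$, so on any element of $A$ with components in degrees $0,\dots,m$ the operator $\omega$ acts nilpotently: $\omega^{m+1}$ annihilates such an element. Hence $(1+\omega)$ and $(1-\omega)$ are well-defined invertible operators (as finite sums when applied to any fixed truncated element), and they are mutually inverse: $(1+\omega)(1-\omega)=1-\omega^2$, which is \emph{not} the identity, so one cannot simply say $(1-\omega)=(1+\omega)^{-1}$. The real content is therefore to understand the twisted shift $a_\ell\mapsto (1+\omega)^{\ell+r}a_\ell$ and invert it degree by degree.

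First I would reformulate \eqref{klara} degree-wise: since $\omega$ maps $A_{\ell+1}\to A_\ell$, the degree-$k$ component of $(1+\omega)^{\ell+r}a_\ell$ is $\binom{\ell+r}{\ell-k}\omega^{\ell-k}a_\ell$, so
\begin{equation*}
b_k=\sum_{\ell\ge k}\binom{\ell+r}{\ell-k}\omega^{\ell-k}a_\ell.
\end{equation*}
Likewise the claimed inverse formula $a_k'=\sum_{\ell\ge 0}(1-\omega)^{\ell+r}b_\ell$ has degree-$k$ component $\sum_{\ell\ge k}(-1)^{\ell-k}\binom{\ell+r}{\ell-k}\omega^{\ell-k}b_\ell$. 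Substituting the expression for $b_\ell$ and collecting the coefficient of $\omega^{j-k}a_j$ (for $j\ge k$) reduces the whole statement to the purely combinatorial identity
\begin{equation*}
\sum_{k\le \ell\le j}(-1)^{\ell-k}\binom{\ell+r}{\ell-k}\binom{j+r}{j-\ell}=\delta_{jk},
\end{equation*}
valid for all integers $r$. Thus the plan is: (1) expand both sides in powers of $\omega$ using the binomial theorem and the degree-shift property of $\omega$; (2) interchange the finite sums; (3) identify the inner sum with the above binomial identity; (4) conclude $a_k'=a_k$ for all $k$, hence $\sum_{\ell=0}^m a_\ell=\sum_{k=0}^m(1-\omega)^{k+r}b_k$.

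The one genuine computation is step (3). I would handle it by the standard trick of writing $\binom{\ell+r}{\ell-k}\binom{j+r}{j-\ell}=\binom{j+r}{j-k}\binom{j-k}{\ell-k}$ (a trinomial revision / subset-of-a-subset identity, valid for arbitrary upper index $r$), pulling $\binom{j+r}{j-k}$ out of the sum, and then evaluating $\sum_{\ell=k}^{j}(-1)^{\ell-k}\binom{j-k}{\ell-k}=(1-1)^{j-k}=\delta_{jk}$. This is clean and avoids any case analysis on the sign of $r$. The main (and only) obstacle is getting the binomial bookkeeping exactly right when $r$ is negative, since then $\binom{\ell+r}{\ell-k}$ is interpreted via the polynomial/generalized binomial coefficient; the trinomial revision identity $\binom{x}{a}\binom{a}{b}=\binom{x}{b}\binom{x-b}{a-b}$ still holds as a polynomial identity in $x$, so specializing $x=j+r$, $a=j-k$, $b=\ell-k$ is legitimate, and this is the point I would make explicit. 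Everything else is routine once the formal setup with $\omega$ nilpotent on truncations is in place.
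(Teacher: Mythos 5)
Your proof is correct, but it takes a genuinely different route from the one in the paper. The paper's argument is a generating-function trick: it identifies $a$ with the $A$-valued function $\hat a(z)=\sum_\ell z^{\ell+r}a_\ell$, observes that the transform $a\mapsto\sum_\ell(1+\omega)^{\ell+r}a_\ell$ amounts to the formal substitution $z\mapsto z+\omega$ in $\hat a$ (using $(z+\omega)^{\ell+r}=(1+\omega/z)^{\ell+r}z^{\ell+r}$ and the nilpotency of $\omega$ on each $a_\ell$), so that the transform with $-\omega$ undoes it, i.e.\ $T_{-\omega}\circ T_\omega=\mathrm{Id}$ — no coefficient computation at all. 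You instead expand degree by degree, reduce the claim to the identity $\sum_{k\le\ell\le j}(-1)^{\ell-k}\binom{\ell+r}{\ell-k}\binom{j+r}{j-\ell}=\delta_{jk}$, and settle it by trinomial revision plus the alternating sum $(1-1)^{j-k}$; your rewriting $\binom{\ell+r}{\ell-k}\binom{j+r}{j-\ell}=\binom{j+r}{j-k}\binom{j-k}{\ell-k}$ is indeed valid as a polynomial identity in $r$ (both sides are polynomials of degree $j-k$ in $r$ agreeing at all large integers), so negative $r$ causes no trouble, and your preliminary observation that $(1+\omega)(1-\omega)\ne 1$ correctly locates where the content lies. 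What each approach buys: yours is elementary, makes the inversion completely explicit at the level of coefficients, and pins down exactly which binomial identity is responsible; the paper's substitution argument is shorter and conceptually transparent, absorbing all the combinatorics into the tautology $(z+\omega)-\omega=z$, at the cost of having to make sense of the formal negative-power expansions (which both proofs ultimately justify by nilpotency of $\omega$ on truncations). Two cosmetic points only: in the truncated setting of the lemma your inner sums run up to $m$ rather than to infinity, and the substitution you quote for trinomial revision is most cleanly taken in the form $\binom{x}{a}\binom{x-a}{b}=\binom{x}{a+b}\binom{a+b}{a}$ with $x=j+r$, $a=j-\ell$, $b=\ell-k$, followed by the symmetry $\binom{j-k}{j-\ell}=\binom{j-k}{\ell-k}$; neither affects correctness.
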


%Notice that the right hand side in \eqref{klara} has components of degrees running from
%$0$ so that all $b_k$ are well-defined. 
This lemma is probably well-known but we sketch a proof.

\begin{proof} [Sketch of proof]
We can identify $a\in A$ with the $A$-valued meromorphic function 
$$
z\mapsto \hat a(z)=\sum_{\ell\ge 0} z^{\ell+r} a_\ell.
$$
Let 
$$
T_\omega a := \sum_{\ell= 0}^m(1+\omega)^{\ell+r} a_\ell.
$$
Since $(z+\omega)^{\ell+r} a_\ell=(1+\omega/z)^{\ell+r} z^{\ell+r} a_\ell$ it follows that 
$$
\widehat {T_\omega a}(z)= \sum_{\ell= 0}^m (z+\omega)^{\ell+r} a_\ell,
$$
i.e.,  $\widehat {T_\omega a}(z)$ is obtained by formally replacing each occurrence of $z$ in $\hat a(z)$
by $z+\omega$. 
%This can also be considered as a simple instance of operator calculus. 
It is now clear that 
$T_{-\omega}\circ T_\omega=Id$ which proves the lemma.
\end{proof}

Recall, cf.~\eqref{kulting},  that $\mu\in\B_k(\Pk^n)$ has the {\it degree}
\begin{equation}\label{degdef}
\deg\mu:=\int_{\Pk^n}\omega^k\w \mu.
\end{equation}
%where $\omega$ is the first Chern class of $\Ok(1)\to\Pk^N$, for instance represented by the Fubini-Study
%metric form. 
If $\mu=\mu_0+\mu_1+\cdots $, where $\mu_k$ has pure
dimension $k$, then 
%\in\B_k(\Pk^n)$, then 
$\deg \mu:=\deg\mu_0+\deg\mu_1+\cdots$.

\begin{proof}[Proof of \eqref{buske}]
From \eqref{potatis} we have that 
\begin{equation}\label{pompe}
\mu_1\di\mu_2=\sum_{\ell=\max(0,\rho)}^{\dim V}
(1+\omega)^{\ell-\rho}
 (\mu_1\bullet\mu_2)_\ell=
 \sum_{\ell=\max(0,\rho)}^{\dim V}
\sum_{j=0}^{\min(\ell-\rho,\ell)}  
{{\ell-\rho} \choose j}
\omega^j \w (\mu_1\bullet\mu_2)_\ell
\end{equation}
since  $\omega^j \w (\mu_1\bullet\mu_2)_\ell=0$ for degree reasons when
$j>\ell$. 
We get the estimate
\begin{multline*}
\deg (\mu_1\di\mu_2)=\sum_{\ell=\max(0,\rho)}^{\dim V}
\sum_{j=0}^{\min(\ell-\rho,\ell)}  
{{\ell-\rho} \choose j}
\deg (\omega^j \w (\mu_1\bullet\mu_2)_\ell) \le 
 \\
 \sum_{\ell=\max(0,\rho)}^{\dim V} 
 \sum_{j=0}^{\ell-\rho}  
{{\ell-\rho} \choose j}
 \int_{\Pk^n}\omega^{\ell}\w
 (\mu_1\bullet\mu_2)_{\ell} =\\
 \sum_{\ell=\max(0,\rho)}^{\dim V} 
 2^{\ell-\rho}  \int_{\Pk^n}\omega^{\ell}\w
 (\mu_1\bullet\mu_2)_{\ell} 
 \le 2^{\dim V-\rho}\deg (\mu_1\bullet\mu_2).
 \end{multline*}
\end{proof}

In view of the proof we have equality in \eqref{buske} if $\rho\ge 0$ and in addition 
only the term with $\ell=\dim V$ occurs.

\begin{ex}
Let $\mu_1$ and $\mu_2$ be the same $k$-plane $V$ in $\Pk^{n}$.  Then $V\bullet V=V$, 
see \cite[Section~1]{aeswy2}.  Thus only the term corresponding to $\ell=\dim V=k$ occurs in 
\eqref{potatis}. If in addition $\rho\ge 0$, i.e., $2k\ge n$, then each term in the expansion of 
$(1+\omega)^{\ell-\rho}$ gives a 
contribution and therefore, since $\ell=\dim V$,
$$
\deg (V\di V)=2^{\dim V-\rho} \deg(V\bullet V)
$$
so that the estimate  \eqref{buske} is sharp. 
 \end{ex}

\section{The $\bullet_L$-product on a projective manifold $Y$}\label{bl}
We shall now see that if $Y$ is projective and $L\to Y$ is a very ample line bundle,  then  there is 
an associated product $\bl$ with the desired local multiplicities and a B\'ezout inequality 
for effective generalized cycle classes. 

By definition  `very ample'  means that there is an embedding 
\begin{equation}\label{lalla}
i\colon Y\to \Pk^M
\end{equation}
such that $L=i^*\Ok_{\Pk^M}(1)$.  
For $\mu_1,\mu_2\in \B(Y)$, we define  $\mu_1\bl\mu_2$ as the unique
element in $\B(Y)$ such that
\begin{equation}\label{lbullet}
i_*(\mu_1\bl\mu_2)=i_*\mu_1\bullet i_*\mu_2,
\end{equation}
where the right hand side is the $\bullet$-product in $\P^M$. We shall see that $\bl$ only depends
on $L$ and not on the embedding $i$.

Assume that $\Pk^M=\{[z_0,\ldots,z_m]; \ z\in\C^{M+1}\}$. Since then
$z_0,\ldots, z_M$ are global sections of $\Ok(1)\to\Pk^M$ it follows
that $s_k:=i^* z_k$, $k=0,\ldots, M$, are  in $H^0(Y,L)$.
Moreover, $i$ is given by
%given by 
\begin{equation}\label{gogo}
x\mapsto [s_0(x),\ldots,s_M(x)].
\end{equation}
Conversely, if we have $s_0,\ldots, s_M$ in $H^0(Y,L)$ such that \eqref{gogo} defines an embedding,
then $i^*\Ok(1)=L$. In fact, since $g_{\ell k}=z_\ell/z_k$ are transition functions for $\Ok(1)$, 
$s_\ell/s_k=i^*g_{\ell k}$ are transition functions for $i^*\Ok(1)$.  
Let 
$$
N=\dim H^0(Y,L)-1.
$$
Given an embedding \eqref{gogo} let us select a maximal linearly independent subset 
$s_0,\ldots,s_{N'}$ of the $s_k$.  Notice that then $N'\le N$. 
Let $i'\colon Y\to \Pk^{N'}$ be the embedding defined by these sections.
Then, there is a linear subspace $\iota\colon V\to \Pk^M$ such that
$i=\iota\circ i'$.  In view of \cite[Proposition~6.7]{aeswy2}, $i$ and $i'$ give rise to the same
product on $Y$.

\smallskip
Thus we can assume that our embedding \eqref{lalla} is defined by \eqref{gogo}, where
$s_0,\ldots,s_M$ is a linearly independent set in  $H^0(Y,L)$. 
In view of \cite[Example~6.4]{aeswy2} the product $\bl$ only depends on the subspace of  
$H^0(Y,L)$ spanned by the given sections.
\begin{prop}\label{snarstucken}
Assume that we have the embedding \eqref{lalla} and let 
 \begin{equation}\label{hatrho}
 \hat \rho=\dim\mu_1+\dim\mu_2-M.
\end{equation}
Assume that $\mu_1,\mu_2\in\B(Y)$ have pure dimensions and
let $d=\dim\mu_1+\dim\mu_2$ and $V=|\mu_1|\cap|\mu_2|$.
 Then 
\begin{equation}\label{snar1}
\mu_1\di_Y\mu_2=\sum_{\ell=\max(0,\hat\rho)}^{\dim V}
(1+\omega_L)^{\ell-d-1}\w c(TY)\w(\mu_1\bullet_L\mu_2)_\ell
\end{equation}
and
\begin{equation}\label{snar2}
\mu_1\bl\mu_2=\sum_{k=0}^{\dim V}
(1-\omega_L)^{k-d-1}(c(TY)^{-1} \w \mu_1\di_Y\mu_2)_k.
\end{equation}
\end{prop}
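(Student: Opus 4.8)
The idea is to reduce the two identities \eqref{snar1} and \eqref{snar2} to the corresponding ones on $\Pk^M$, namely \eqref{potatis} and \eqref{potatis2}, by pushing everything forward under the embedding $i\colon Y\to\Pk^M$ of \eqref{lalla} and then using the injectivity of $i_*$ on $\B(Y)$ (Section~\ref{kung}~(v)) together with the projection formula \eqref{bus}. The key translation dictionary is: $i_*(\mu_1\bl\mu_2)=i_*\mu_1\bullet i_*\mu_2$ by \eqref{lbullet}, with the component of dimension $\ell$ respected since $i$ is an embedding; $i^*\omega_{\Pk^M}=\omega_L$ by definition of $L$; $i^*c(T\Pk^M)=c(TY)\w c(N_{i(Y)}\Pk^M)$ from the normal bundle exact sequence; and, crucially, Proposition~\ref{koks}, i.e. \eqref{anka}, which expresses $i_*\mu_1\di_{\Pk^M}i_*\mu_2$ in terms of $\mu_1\di_Y\mu_2$ and $i^*c(N_{i(Y)}\Pk^M)$.

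\textbf{Step 1: rewrite \eqref{potatis} in terms of $Y$-data.} Apply \eqref{potatis} on $\Pk^M$ to the cycles $i_*\mu_1,i_*\mu_2$; here the expected dimension is $\hat\rho$ of \eqref{hatrho} and the set-theoretic intersection is $i(V)$, so the sum runs over $\ell$ from $\max(0,\hat\rho)$ to $\dim V$. This gives
\begin{equation*}
i_*\mu_1\di_{\Pk^M}i_*\mu_2=\sum_{\ell=\max(0,\hat\rho)}^{\dim V}(1+\omega_{\Pk^M})^{\ell-\hat\rho}\,(i_*\mu_1\bullet i_*\mu_2)_\ell.
\end{equation*}
Now substitute the left side using \eqref{anka}, substitute $(i_*\mu_1\bullet i_*\mu_2)_\ell=i_*(\mu_1\bl\mu_2)_\ell$ using \eqref{lbullet}, and replace $\omega_{\Pk^M}^{\ell-\hat\rho}$ appearing against a class supported on $i(Y)$: writing $\hat c(T\Pk^M)=(1+\omega_{\Pk^M})^{M+1}$ one has $\ell-\hat\rho=\ell-d+M$, which is the exponent that must be converted. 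The cleanest route is to combine the factor $(1+\omega_{\Pk^M})^{\ell-\hat\rho}$ with the universal identity $(1+\omega_{\Pk^M})^{M+1}=\hat c(T\Pk^M)$ so that everything is expressed through $\hat c(T\Pk^M)$; then pull back via the projection formula \eqref{bus}, using $i^*c(T\Pk^M)=c(TY)\w c(N_{i(Y)}\Pk^M)$ and $i^*\omega_{\Pk^M}=\omega_L$. The factors $c(N_{i(Y)}\Pk^M)$ on the two sides — the one produced by \eqref{anka} and the one from pulling back $c(T\Pk^M)$ — must cancel; this is the step requiring care, since $c(N_{i(Y)}\Pk^M)$ is only invertible as a formal power series in classes of positive degree, so the cancellation should be organized by multiplying both sides by $c(N_{i(Y)}\Pk^M)^{-1}$ and checking the bookkeeping of exponents degree by degree. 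After cancellation and after stripping $i_*$ (legitimate by injectivity, Section~\ref{kung}~(v)), one is left with exactly \eqref{snar1}, the exponent $\ell-d-1$ arising as $\ell-\hat\rho-(M+1)=\ell-d-1$.

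\textbf{Step 2: deduce \eqref{snar2} from \eqref{snar1}.} Formula \eqref{snar1} expresses $\mu_1\di_Y\mu_2$ as a sum $\sum_\ell(1+\omega_L)^{\ell-d-1}\w c(TY)\w(\mu_1\bl\mu_2)_\ell$. Multiply both sides by $c(TY)^{-1}$ and apply Lemma~\ref{mangel} with $A=\B(Y)$ as a graded module over the Chern-class algebra, with the shift operator being $\omega_L$, with $r=-d-1$, with $a_\ell=(\mu_1\bl\mu_2)_\ell$ and with $b_k=(c(TY)^{-1}\w\mu_1\di_Y\mu_2)_k$. Note that although the sum in \eqref{snar1} begins at $\ell=\max(0,\hat\rho)$, the operator $c(TY)\w(1+\omega_L)^{\ell-d-1}$ produces contributions in all lower dimensions, so after applying $c(TY)^{-1}$ and extracting graded pieces the inverted sum legitimately runs over all $k$ from $0$ to $\dim V$ — exactly as in the parenthetical remark following \eqref{potatis2}. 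Lemma~\ref{mangel} then gives $\sum_\ell a_\ell=\sum_k(1-\omega_L)^{k-d-1}b_k$, which is \eqref{snar2}.

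\textbf{Main obstacle.} The substantive point is Step~1, specifically verifying that the two occurrences of $c(N_{i(Y)}\Pk^M)$ — one inserted by Proposition~\ref{koks}, one by the projection formula — truly cancel after the exponent of $(1+\omega_{\Pk^M})$ is matched against $\hat c(T\Pk^M)$, and that no spurious powers of $\omega_L$ survive. This is essentially a formal power-series identity in the nilpotent classes $\omega_L$ and $c_j(TY)$, and the safest way to present it is to check equality of graded components of fixed dimension, where all series are finite; the degree reason ($\omega_{\Pk^M}^j$ annihilates a class of dimension $<j$) is what forces the upper limit $\dim V$ throughout. Everything else — bilinearity, commutativity, the support statement, and the fact that $\bl$ depends only on $L$ — has already been established in Theorem~\ref{main2} and the discussion preceding this proposition, so it need not be revisited here.
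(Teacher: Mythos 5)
Your proposal is correct and follows essentially the same route as the paper: combine Proposition~\ref{koks} with \eqref{potatis} applied to $i_*\mu_1,i_*\mu_2$, use $i^*\omega=\omega_L$ and the normal bundle sequence, strip $i_*$ by injectivity to get \eqref{snar1}, and then obtain \eqref{snar2} from Lemma~\ref{mangel}. The only difference is cosmetic bookkeeping: the paper writes $i^*c(N_{i(Y)}\Pk^M)=(1+\omega_L)^{M+1}c(TY)^{-1}$ directly instead of introducing $c(T\Pk^M)$ on both sides and cancelling the normal-bundle factor, which is the same computation.
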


Since the right-hand side of \eqref{snar2} only depends on $L$,  this holds for $\mu_1\bl\mu_2$
as well.

\begin{proof} 
%Fix an embedding \eqref{lalla}.  
By Proposition~\ref{koks} and \eqref{potatis} we have
$$
i_*\big(i^*c(N_{i(Y)}\Pk^M)   \mu_1\di_Y\mu_2\big)=\sum_{\ell=\max(0,\hat\rho)}^{\dim V}(1+\omega)^{\ell-\hat\rho}
(i_*\mu_1\bullet i_*\mu_2)_\ell.
$$
Notice  that since 
$$
N_{i(Y)}\Pk^M=T\Pk^M/T(i(Y)),
$$
and $i^*\Ok(1)=L$, so that $i^*\omega=\omega_L$, we have that 
$$
i^*c(N_{i(Y)}\Pk^M)= i^*\big( c(T\Pk^M) c(T(i(Y))^{-1}\big)=(1+\omega_L)^{M+1} c(TY)^{-1}
$$
on $Y$. 
Thus
$$
(1+\omega_L)^{M+1} c(TY)^{-1}\w  \mu_1\di_Y\mu_2=
\sum_{\ell=\max(0,\hat\rho)}^{\dim V}(1+\omega_L)^{\ell-\hat\rho}
(\mu_1\bl \mu_2)_\ell
$$
which is the same as \eqref{snar1}.  Now  \eqref{snar2} follows by Lemma~\ref{mangel}.
\end{proof}

%\noindent 
Notice that there may occur negative powers of $1-\omega_L$ and $1+\omega_L$ in the sums
\eqref{snar1} and \eqref{snar2}.

 \smallskip

%\noindent 
Recall that if  $\mu\in\B_k(Y)$, then, cf.~\eqref{kulting}, 
% $\mu\in\B_k(Y)$ we define % for some embedding en 
$$
\deg_L\mu=\int_Y \mu\w \omega_L^k.
$$
%for and extend to $\B(Y)$ by linearity. 

 \begin{proof}[Proof of Theorem~\ref{main2}]
 Parts (i) and (ii) follow from Theorem~\ref{main1} and \eqref{snar2}.
 %%%
Part (iii) follows from the corresponding statement for $\bullet=\bullet_{\Pk^M}$ and
\eqref{tosca50}.
Alternatively, it follows
from \eqref{snar2} and Theorem~\ref{main1}~(ii). %he analogous statement for $\di_Y$, cf.~Section~\ref{multi}.

Part (iv) follows from the analogous statement for $\bullet$ on $\Pk^M$. In fact,
first notice that $\mu$ is effective if and only if $i_*\mu$ is.  Then observe that
if $\mu$ has pure dimension $k$, then
$$
\deg_L\mu=\int_Y \mu\w\omega_L^k=\int_{\Pk^M} i_*\mu\w\omega^k=\deg_{\Pk^M} i_*\mu.
$$
If $\mu_1$ and $\mu_2$  are effective, then $i_*(\mu_1\bl\mu_2)=i_*\mu_1\bullet i_*\mu_2$ is effective,
see \cite[Theorem~1.1]{aeswy2}, 
and hence $\mu_1\bl\mu_2$ is. 
From \cite[Theorem~1.1]{aeswy2} we thus have that 
$$
\deg_L (\mu_1\bl\mu_2)= \deg_{\Pk^M} (i_*\mu_1\bullet i_*\mu_2)\le \deg_{\Pk^M} i_*\mu_1\cdot \deg_{\Pk^M} i_*\mu_2=
\deg_L\mu_1\cdot \deg_L\mu_2
$$
with equality if $\hat\rho\ge 0$.  

 \smallskip

Let us now consider (v).  If $\mu_1$ and $\mu_2$ are cycles that intersect properly on $Y$,
then by Theorem~\ref{main1}, 
\begin{equation}\label{sus}
\mu_1\di_Y\mu_2=(\mu_1\di_Y\mu_2)_\rho=\mu_1\cdot_Y\mu_2,
\end{equation}
where  $\rho=\dim V=\dim\mu_1+\dim\mu_2-n$.
From \eqref{snar2} we have   
$$
\mu_1\bl\mu_2=
\sum_{k=0}^{\rho}
(1-\omega_L)^{k-d-1}(c(TY)^{-1} \mu_1\di_Y\mu_2)_k.
$$
Now $k=\rho$ together with the term $1$ from $(1-\omega_L)^{k-d-1}$ 
gives us  $\mu_1\cdot_Y\mu_2$,  cf.~\eqref{sus}.  All other terms
from $(1-\omega_L)^{k-d-1}$, or for $k<\rho$, will give contributions of strictly lower
dimension, and they have vanishing multiplicities, see Section~\ref{multi}.
 %that is, expressed with the Segre class $s(TY)=c(TY)^{-1}$, 
%\begin{equation}\label{plym}
%\mu_1\bl\mu_2=
 %\mu_1\cdot_Y\mu_2
 %\sum_{k=0}^{\rho}(1-\omega_L)^{k-d-1}s(TY)
%\end{equation}
%%
%Thus the leading term is $\mu_1\cdot_Y\mu_2$ and the other terms have lower
%dimension.
 %
 \end{proof}

We have the following consequence of the proof.

\begin{prop} \label{sugga}
Let $M+1$ be the minimal dimension of a subspace $W$ of $H^0(Y,L)$ such that
\eqref{gogo} is an embedding if $s_0,\ldots,s_M$ is a basis for $W$. If 
$\mu_1,\mu_2\in \B(Y)$ have pure dimensions and
$\hat\rho=\dim\mu_1+\dim \mu_2-M\ge 0$, then 
$$
\deg_L(\mu_1\bl\mu_2)=\deg_L \mu_1\cdot \deg_L \mu_2.
 $$
 \end{prop}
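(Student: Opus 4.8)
The plan is to deduce Proposition~\ref{sugga} directly from the proof of Theorem~\ref{main2}~(iv), specifically from the chain of equalities established there. Indeed, the proof of (iv) reduces $\deg_L(\mu_1\bl\mu_2)$ to $\deg_{\Pk^M}(i_*\mu_1\bullet i_*\mu_2)$, and the last line of that proof already records that the B\'ezout inequality \eqref{olikhet} is an equality when $\hat\rho\ge 0$. So the only new point in Proposition~\ref{sugga} is that we may take $M+1$ to be the minimal dimension of an embedding subspace $W\subset H^0(Y,L)$, i.e.\ that the equality $\deg_L(\mu_1\bl\mu_2)=\deg_L\mu_1\cdot\deg_L\mu_2$ holds already under the weaker hypothesis $\dim\mu_1+\dim\mu_2\ge M$ with this minimal $M$, rather than only for the (possibly larger) $M$ coming from an arbitrary embedding $i\colon Y\to\Pk^M$.

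First I would recall, as established in the beginning of Section~\ref{bl}, that $\bl$ depends only on $L$ and in fact only on the subspace of $H^0(Y,L)$ spanned by the chosen sections; moreover passing from an embedding defined by a linearly independent set $s_0,\ldots,s_M$ to one defined by a proper linear subspace amounts, by \cite[Proposition~6.7]{aeswy2} and \cite[Example~6.4]{aeswy2}, to composing with a linear embedding $\iota\colon V\to\Pk^M$ and does not change the product on $Y$. Hence I may fix a minimal embedding subspace $W$ of dimension $M+1$, pick a basis $s_0,\ldots,s_M$ of $W$, and let $i\colon Y\to\Pk^M$ be the resulting embedding; this $i$ satisfies $i^*\Ok(1)=L$ and induces the product $\bl$.

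Next, with this fixed $i$, I would simply rerun the computation from the proof of Theorem~\ref{main2}~(iv): for $\mu$ of pure dimension $k$ one has $\deg_L\mu=\int_Y\mu\w\omega_L^k=\int_{\Pk^M} i_*\mu\w\omega^k=\deg_{\Pk^M} i_*\mu$, and then
\[
\deg_L(\mu_1\bl\mu_2)=\deg_{\Pk^M}(i_*\mu_1\bullet i_*\mu_2)\le \deg_{\Pk^M} i_*\mu_1\cdot\deg_{\Pk^M} i_*\mu_2=\deg_L\mu_1\cdot\deg_L\mu_2,
\]
with equality precisely when the expected dimension $\hat\rho=\dim i_*\mu_1+\dim i_*\mu_2-M=\dim\mu_1+\dim\mu_2-M$ is non-negative, by the equality case in \cite[Theorem~1.1]{aeswy2} (the B\'ezout equality \eqref{bezout} for the $\bullet$-product on $\Pk^M$). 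Since our hypothesis is exactly $\hat\rho\ge 0$, this gives the claimed identity.

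I do not expect a genuine obstacle here: the statement is essentially a bookkeeping corollary of material already proved, and the main point to get right is the reduction to a \emph{minimal} embedding subspace $W$, i.e.\ invoking \cite[Proposition~6.7]{aeswy2} and \cite[Example~6.4]{aeswy2} to see that shrinking or enlarging the span of the defining sections does not affect $\bl$, so that the value of $M$ relevant for the equality case is indeed the minimal one in the statement. The remaining steps are the same degree-matching and push-forward identities ($\deg_L=\deg_{\Pk^M}\circ\, i_*$ on each graded piece, and \eqref{lbullet}) already used for Theorem~\ref{main2}.
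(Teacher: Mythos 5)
Your proof is correct and essentially the same as the paper's: Proposition~\ref{sugga} is stated there as a direct consequence of the proof of Theorem~\ref{main2}~(iv), run for the embedding \eqref{gogo} coming from a basis of a minimal subspace $W$ (legitimate because $\bl$ depends only on $L$, resp.\ on the span of the chosen sections), together with the degree identity $\deg_L\mu=\deg_{\Pk^M}i_*\mu$ and the B\'ezout equality \eqref{bezout} for $\bullet$ on $\Pk^M$ when $\hat\rho\ge 0$. Only note that ``with equality precisely when $\hat\rho\ge 0$'' is slightly stronger than what is needed or cited; the implication $\hat\rho\ge 0\Rightarrow$ equality suffices and is what the references give.
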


%Notice that if $i$ is a linear embedding of $Y=\P^n$ in $\P^N$, then
%$$
%c(N_Y\Pr^N)=(1+\omega)^{N-n}
%$$
%and hence we get back the formula balblablaa.

\section{Some further properties}\label{further}

In this section we still assume that $Y$ is a projective manifold.
% although some statements
%for $\di$ hold in an arbitrary complex manifold $Y$. 
%
Assume that  $\mu_0,\mu_1\in\B(Y)$ and that $\gamma$ is a smooth (closed) form in an open
subset $U\subset Y$. We say that
$\mu_1=\gamma\w\mu_0$ in  $U$ if  there are generalized cycles  
$\mu_0'$ and $\mu_1'$ representing $\mu_0$ and $\mu_1$, respectively, 
such that $\mu_1'=\gamma\w\mu_0'$ in $U$. 
We have the following version of Proposition~8.4 in \cite{aeswy2}. 

\begin{prop}\label{forsta}
Assume that $\mu_0,\mu_1,\mu_2\in\B(Y)$,  $\gamma$ is smooth in the open set
$U\subset Y$,  and $\mu_1=\gamma\wedge \mu_0$ in $U$.
Then 
%Then 
\begin{equation}\label{mars1}
\mu_1\di_Y \mu_2 = \gamma \wedge (\mu_0 \di_Y \mu_2)
\end{equation}
in $U$.
If $L\to Y$ is a very ample line bundle, then 
\begin{equation}\label{mars2}
\mu_1\bl \mu_2 = \gamma \wedge (\mu_0 \bl \mu_2)
\end{equation}
in $U$.
\end{prop}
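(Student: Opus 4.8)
The plan is to reduce \eqref{mars1} to the analogous statement for the Segre-class operation $S(\J_\Delta,\,\cdot\,)$, which is essentially Proposition~8.4 of \cite{aeswy2} applied to the diagonal ideal sheaf. First I would recall that by definition $\mu_1=\gamma\wedge\mu_0$ in $U$ means we may choose representatives $\mu_0',\mu_1'\in\GZ(Y)$ with $\mu_1'=\gamma\wedge\mu_0'$ in $U$; then, passing to the product space via $j\colon Y\to Y\times Y$, we get $\mu_1'\times\mu_2'=(\gamma\wedge\mu_0')\times\mu_2'$ in the open set $U\times Y\subset Y\times Y$, which equals $(\mathrm{pr}_1^*\gamma)\wedge(\mu_0'\times\mu_2')$ there. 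Thus on $Y\times Y$ we are in the situation of a generalized cycle that is locally a smooth form times another generalized cycle, and I can apply the local statement for the Monge--Amp\`ere/Segre operator $M_k^\sigma\wedge(\,\cdot\,)$ — or, where $\J_\Delta$ is not globally generated by a section, the construction in Proposition~\ref{lastbil}, which is local and hence compatible with restriction to $U\times Y$.

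The key point, which is exactly \cite[Proposition~8.4]{aeswy2} transported to the ideal sheaf $\J_\Delta$, is that $S_k(\J_\Delta,(\mathrm{pr}_1^*\gamma)\wedge\nu)=(\mathrm{pr}_1^*\gamma)\wedge S_k(\J_\Delta,\nu)$ in $U\times Y$ for any $\nu\in\B(Y\times Y)$; this follows because the defining limit \eqref{taket0} commutes with wedging by a fixed smooth closed form, and the wedging is compatible with the resolution of singularities used to make sense of the limit. Summing over $k$ and then multiplying by the smooth factor $c(N_\Delta(Y\times Y))$, which again passes through the smooth form $\mathrm{pr}_1^*\gamma$, we get
\begin{equation*}
j_*(\mu_1\di_Y\mu_2)=(\mathrm{pr}_1^*\gamma)\wedge j_*(\mu_0\di_Y\mu_2)\quad\text{in } U\times Y.
\end{equation*}
Restricting back along $j$ (i.e.\ identifying $\Delta$ with $Y$), $\mathrm{pr}_1^*\gamma$ pulls back to $\gamma$ on $U$, and by injectivity of $j_*$ on $\B$ (property (v) in Section~\ref{kung}) this yields \eqref{mars1} in $U$.

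For \eqref{mars2} I would invoke the embedding $i\colon Y\to\Pk^M$ defining $L$, and the defining relation \eqref{lbullet}: since $\mu_1=\gamma\wedge\mu_0$ in $U$ implies $i_*\mu_1=(i_*\gamma)\wedge i_*\mu_0$ in the open set $i(U)\subset i(Y)$ (extending $\gamma$ locally off $i(Y)$ does not matter, as $i_*$ sees only $i(Y)$), Proposition~8.4 of \cite{aeswy2} gives $i_*\mu_1\bullet i_*\mu_2=(i_*\gamma)\wedge(i_*\mu_0\bullet i_*\mu_2)$ in $i(U)$; pulling back via $i_*^{-1}$ on generalized cycles supported on $i(Y)$ gives \eqref{mars2}. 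Alternatively, and perhaps more cleanly, one deduces \eqref{mars2} directly from \eqref{mars1} via formula \eqref{snar2}: the operators $(1-\omega_L)^{k-d-1}$ and $c(TY)^{-1}$ are wedging by fixed smooth forms, hence commute with wedging by $\gamma$, and taking components of dimension $k$ preserves this (note $\gamma$ need not be homogeneous, so one must be slightly careful matching degrees, but since wedging by a smooth form only raises degree and the multiplicity statements are unaffected, the identity holds). The main obstacle is purely bookkeeping: making sure the "local" statement $S_k(\J,\gamma\wedge\nu)=\gamma\wedge S_k(\J,\nu)$ is legitimately available in the generality needed here (arbitrary $Y$, $\J_\Delta$ possibly not globally a section ideal), which is why I would route it through Proposition~\ref{lastbil} and its local characterization (i) rather than through a global section $\sigma$.
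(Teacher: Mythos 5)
Your proposal is correct and follows essentially the same route as the paper: choose representatives with $\mu_1'=\gamma\wedge\mu_0'$, observe that the Monge--Amp\`ere/Segre operator attached to $\J_\Delta$ commutes with wedging by the (pulled-back) smooth form on the product, multiply by $c(N_\Delta(Y\times Y))$ and descend along $j$ to get \eqref{mars1}. For \eqref{mars2}, your ``cleaner'' alternative via \eqref{snar2} is precisely the paper's argument; its only real content is the degree bookkeeping you acknowledge but leave implicit, namely that for $\gamma$ of pure degree $\nu$ the shift from $d=\dim\mu_1+\dim\mu_2$ to $\dim\mu_0+\dim\mu_2=d+\nu$ exactly compensates the shift of components, the leftover low-dimensional terms dying against $\gamma$.
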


\begin{proof}
Fix suitable representatives $\mu_0',\mu_1',\mu_2'$ in $\GZ(Y)$ and a section $\eta$ that defines the diagonal $\Delta$
in $Y\times Y$.  Moreover, let $\hat c(N_\Delta (Y\times Y))$ be a fixed representative of the Chern class
$c(N_\Delta (Y\times Y))$.  As usual, let $j\colon Y\to Y\times Y$ be the natural parametrization of 
$\Delta$.
Then  $j_*(\mu_1\di_Y\mu_2)$ is represented, cf.~Section~\ref{sklass},  by the generalized cycle
\begin{equation}\label{tupp1}
j_*(\mu'_1\di_Y\mu'_2):=\hat c(N_\Delta (Y\times Y))\w M^\eta \w(\mu_1'\times\mu_2').
\end{equation}
By assumption $\mu_1'=\gamma\w\mu_0'$ in $U$.  Thus 
$\mu_1'\times\mu_2'=(\gamma\times 1)\w (\mu_0'\times\mu_2')$ in $U\times U$.   In view of 
\cite[Example~2.7]{aeswy2} therefore
\begin{equation}\label{tupp2}
M^\eta\w (\mu_1'\times\mu_2')=(\gamma\times 1)\w M^\eta \w(\mu_0'\times\mu_2')
\end{equation}
in $U\times U$.   
By \eqref{tupp1} and \eqref{tupp2}, 
\begin{multline*}
j_* (\gamma\w (\mu_0'\di_Y\mu_1'))=j_*( j^*(\gamma\times 1)\w (\mu_0'\di_Y\mu_1'))=\\
(\gamma\times 1)\w j_*(\mu_0'\di_Y\mu_1')= (\gamma\times 1)\w 
\hat c(N_\Delta (Y\times Y))\w M^\eta \w(\mu_0'\times\mu_2')=\\
\hat c(N_\Delta (Y\times Y))\w M^\eta \w(\mu_1'\times\mu_2')=j_*(\mu_1'\di_Y\mu_2')
\end{multline*}
in $U\times U$. Now \eqref{mars1} follows.

\smallskip
Assume that $\gamma$ has pure degree $\nu$. %Consider \eqref{snar2}. 
Then by \eqref{mars1}, 
\[
(c(TY)^{-1} \mu_1 \di_Y \mu_2)_k=
(\gamma \wedge c(TY)^{-1} \mu_0 \di_Y \mu_2)_k=
\gamma\wedge (c(TY)^{-1} \mu_0 \di_Y \mu_2)_{k+\nu}.
\]
Let $d$ %and $\rho$ 
be as in Proposition~\ref{snarstucken} and let
$\tilde d=\dim \mu_0 +\dim \mu_2$; then $d=\tilde d-\nu$. 
% and $\tilde \rho=\tilde d-N$. Then
%$d=\tilde d-g$ and $\rho=\tilde \rho-g$. 
By \eqref{snar2},
\begin{multline*}
\mu_1\bl \mu_2
=
\sum_{k\geq 0}
(1-\omega_L)^{k-d-1}(c(TY)^{-1} \mu_1\di_Y\mu_2)_k
=\\
\gamma\wedge \sum_{k\geq 0}
(1-\omega_L)^{k+\nu-\tilde d-1}(c(TY)^{-1} \mu_0\di_Y\mu_2)_{k+\nu}
=\\
\gamma\wedge \sum_{r\geq \nu}
(1-\omega_L)^{r-\tilde d-1}(c(TY)^{-1} \mu_0\di_Y\mu_2)_{r}
=\\
\gamma\wedge \sum_{r\geq 0}
(1-\omega_L)^{r-\tilde d-1}(c(TY)^{-1} \mu_0\di_Y\mu_2)_{r}
=
\gamma \wedge (\mu_0\bl \mu_2),
\end{multline*}
since the terms with $r<\nu$ in the last sum vanish when multiplied by $\gamma$.  
\end{proof}

We have the following version of Proposition 8.3
in \cite{aeswy2}.

\begin{prop}\label{tredje}
If $\mu\in\B(Y)$, then 
\begin{equation}\label{borr}
\mu \di_Y \1_Y=\mu.
\end{equation}
If $a$ is a point in $Y$, then  
\begin{equation}\label{fjarde}
\mu\di_Y \{a\} = \mult_a\mu \cdot [a]
\end{equation}
and 
\begin{equation}\label{femte}
\mu\bl \{a\} = \mult_a\mu \cdot [a].
\end{equation}
\end{prop}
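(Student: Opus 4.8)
The plan is to establish the three identities in Proposition~\ref{tredje} in order, using the definitions of $\di_Y$ and $\bl$ together with the formulas already proved in this section.

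For \eqref{borr}, I would start from \eqref{apa1}, i.e.\ $\mu\di_Y\1_Y=c(TY)\w S(\J_\Delta,\mu\times\1_Y)$. Identifying $\Delta$ with $Y$ via $j$, the cycle $\mu\times\1_Y$ restricted to a neighborhood of $\Delta$ behaves like the pullback of $\mu$ under the second projection, so that $\J_\Delta$ meets $\mu\times\1_Y$ in a regular embedding and $S(\J_\Delta,\mu\times\1_Y)=c(N_\Delta(Y\times Y))^{-1}\w j_*\mu$ — concretely, the Segre class of the ideal of a submanifold cut out transversally by (the pullback of) the diagonal recovers the inverse Chern class of the normal bundle wedged with $\mu$; this is the content of \cite[Theorem~1.4]{aeswy1} applied as in \eqref{foton}. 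Multiplying by $c(TY)=c(N_\Delta(Y\times Y))$ then cancels, giving $\mu\di_Y\1_Y=\mu$. I expect this to be essentially a special case of part (iii) of Theorem~\ref{main1}, since $\mu$ and $\1_Y$ always intersect properly (their set-theoretic intersection is $|\mu|$, of the expected dimension $\dim\mu+n-n=\dim\mu$) and $\mu\cdot_Y\1_Y=\mu$.

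For \eqref{fjarde}, the set-theoretic intersection $V=|\mu|\cap\{a\}$ is either empty (if $a\notin|\mu|$, whence both sides vanish) or the single point $\{a\}$, which has dimension $0$. Thus by Theorem~\ref{main1}~(i) the product $\mu\di_Y\{a\}$ is supported at $a$ and has only a component of dimension $0$, so it equals $m\,[a]$ for some integer $m$. By Theorem~\ref{main1}~(ii) this integer is $m=\mult_a(\mu\di_Y\{a\})_0=\epsilon_0(\mu,\{a\},a)$. It remains to identify $\epsilon_0(\mu,\{a\},a)$ with $\mult_a\mu$. By definition \eqref{taket12}, $\epsilon_0(\mu,\{a\},a)=e_{d}(\J_\Delta,\mu\times\{a\},j(a))$ with $d=\dim\mu$, and since $\mu\times\{a\}$ is supported on the submanifold $Y\times\{a\}$ which meets $\Delta$ transversally at $j(a)$, restricting $\J_\Delta$ to $Y\times\{a\}\cong Y$ makes it the maximal ideal at $a$; the top Segre number of the maximal ideal is the Hilbert–Samuel multiplicity, which for a generalized cycle $\mu$ at $a$ is exactly $\mult_a\mu$ (cf.\ Section~\ref{multi} and the remark after \eqref{taket} that $e_k(\J,X,x)$ reduces to the Hilbert–Samuel multiplicity when $Z$ is a point). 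This identification is the main obstacle — it requires knowing that the Segre numbers on $\mu$ relative to the maximal ideal give back the multiplicity of $\mu$ — but it is a direct consequence of the analytic description of multiplicities in \cite{aeswy1}.

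Finally, \eqref{femte} follows from \eqref{fjarde} by passing through the embedding $i\colon Y\to\Pk^M$. Since $\{a\}$ has dimension $0$, only the $k=0$ term survives in the expansion \eqref{snar2}, and there $(1-\omega_L)^{k-d-1}$ and $c(TY)^{-1}$ act trivially on a $0$-dimensional class; hence $\mu\bl\{a\}=(c(TY)^{-1}\w\mu\di_Y\{a\})_0=(\mu\di_Y\{a\})_0=\mu\di_Y\{a\}=\mult_a\mu\cdot[a]$. Alternatively one can argue directly: by \eqref{lbullet}, $i_*(\mu\bl\{a\})=i_*\mu\bullet i_*\{a\}=i_*\mu\bullet\{i(a)\}$, and the known special case of the $\bullet$-product on $\Pk^M$ (Proposition~8.3 in \cite{aeswy2}) gives $i_*\mu\bullet\{i(a)\}=\mult_{i(a)}(i_*\mu)\cdot[i(a)]=\mult_a\mu\cdot i_*[a]$ using \eqref{tosca50}; since $i_*$ is injective on $\B(Y)$ by Section~\ref{kung}~(v), we conclude \eqref{femte}.
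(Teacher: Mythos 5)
The weak point is your argument for \eqref{borr}. Both of your justifications---that $S(\J_\Delta,\mu\times\1_Y)=c(N_\Delta(Y\times Y))^{-1}\w j_*\mu$ ``by \cite[Theorem~1.4]{aeswy1} applied as in \eqref{foton}'', and that \eqref{borr} is ``essentially a special case of Theorem~\ref{main1}~(iii)''---only make sense when the object carrying the Segre class is an honest cycle. Theorem~\ref{main1}~(iii) is stated and proved only for \emph{cycles} intersecting properly, and a general $\mu\in\B(Y)$ is not a cycle; indeed for a class with moving components one has $\dim|\mu|>\dim\mu$, so even your dimension count ``$|\mu|\cap Y$ has the expected dimension $\dim\mu$'' is off. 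Likewise \eqref{foton} rests on \cite[Theorem~1.4]{aeswy1}, which computes $c(N_{\J}X)\w S(\J,\1_X)=[Z_\J]$ for an ideal defining a regular embedding in a reduced space $X$; it says nothing about $S(\J_\Delta,\mu\times\1_Y)$ for a generalized cycle $\mu$, and ``$\J_\Delta$ meets $\mu\times\1_Y$ in a regular embedding'' is not a meaningful hypothesis in $\B(Y\times Y)$. The identity you assert is equivalent to \eqref{borr} itself, so at the decisive step the argument is circular. The paper supplies exactly the missing reduction: write $\mu=\tau_*\alpha$ with $\tau\colon W\to Y$ proper and $\alpha$ a product of Chern forms, set $T=\tau\times 1$, and observe that $T^*\J_\Delta$ defines the graph $G$ of $\tau$, hence a regular embedding in the manifold $W\times Y$ \emph{independently of} $\mu$; then \eqref{trottoar}, the identity $S(T^*\J_\Delta,\alpha\otimes 1)=(\alpha\otimes 1)\w S(T^*\J_\Delta,\1_{W\times Y})$ from \cite[Proposition~5.6]{aeswy1}, and $c(N_G(W\times Y))\w S(T^*\J_\Delta,\1_{W\times Y})=[G]$ reduce everything to the graph, whose push-forward is $j_*\mu$. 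Some argument of this kind, valid for arbitrary generalized cycle classes, is what your sketch skips.

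Your treatment of \eqref{fjarde} and \eqref{femte} is essentially correct and close to the paper's. Reducing \eqref{fjarde} via Theorem~\ref{main1}~(i)--(ii) to the identity $\epsilon_0(\mu,\{a\},a)=\mult_a\mu$, and identifying this number by restricting $\J_\Delta$ to $Y\times\{a\}$ (via \eqref{trottoar}), where it becomes the maximal ideal at $a$, amounts to the paper's local computation: in coordinates, $S(\J_\Delta,\mu\times\{a\})$ is represented by $M^{x-y}\w(\mu\otimes[a])=(M^{x-a}\w\mu)\otimes[a]=\mult_a\mu\cdot j_*[a]$, the middle equality being \cite[Eq.~(4.5)]{aeswy2}. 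That local identity (equivalently, the definition of $\mult_a$ via Lelong numbers, cf.~Section~\ref{multi}) is the correct citation for the step you call the main obstacle; the Hilbert--Samuel remark in the preliminaries concerns $e_k(\J,X,x)$ for the space $X$, not $e_k(\J,\mu,x)$ for general $\mu$. Your two derivations of \eqref{femte}---either from \eqref{fjarde} and \eqref{snar2} by dimension reasons, or directly from \eqref{lbullet}, \cite[Proposition~8.3]{aeswy2}, \eqref{tosca50} and injectivity of $i_*$---are precisely the two routes indicated in the paper.
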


\begin{proof}
We can assume that $\mu=\tau_*\alpha$, where $\tau\colon W\to Y$ is proper and $\alpha$ is a product
of components of Chern forms. If $T=\tau\times 1\colon W\times Y\to Y\times Y$, then
$T^*\J_\Delta$ is a regular embedding in $W\times Y$ since this sheaf defines the graph $G$ of
$\tau$ in $W\times Y$.   
Notice that since $N_G(W\times Y)\simeq T|_{G}^*N_\Delta(Y\times Y)$,
\begin{equation}\label{punk1}
c(N_G(W\times Y)) = T|_{G}^*c(N_\Delta(Y\times Y)).
\end{equation} 
Moreover, by \eqref{trottoar} we have
\begin{equation}\label{punk2}
S(\J_\Delta,\mu\otimes 1)=T_* S(T^*\J_\Delta, \alpha\otimes 1),
\end{equation}
and by \cite[Proposition~1.4]{aeswy1}, 
\begin{equation}\label{punk3}
c(N_G(W\times Y))\w S(T^*\J_\Delta,1_{W\times Y})=[G].
\end{equation}
By \cite[Proposition~5.6]{aeswy1} we have that
$S(T^*\J_\Delta, \alpha\otimes 1)=(\alpha\otimes 1)\w S(T^*\J_\Delta, 1_{W\times Y})$.
Together with \eqref{punk1}, \eqref{punk2}, and \eqref{punk3}  we have % in view of 
\begin{multline*}
j_*(\mu\di_Y \1_Y)=c(N_\Delta (Y\times Y))\w S(\J_\Delta,\mu\otimes 1)=
T_*\Big(c(N_G (W\times Y))\w S(T^*\J_\Delta,\alpha\otimes 1)\Big)=\\
T_*\Big((\alpha\otimes 1)\w c(N_G(W\times Y))\w S(T^*\J_\Delta,1_{W\times Y})\Big)=
T_*\big((\alpha\otimes 1)\w [G]\big). %=[\Delta]\w(\mu\times \1_Y)
\end{multline*}
Let $\tilde\tau\colon W\to W\times Y$, $\tilde\tau(w)=(w,\tau(w))$, be the graph embedding.
Then $(\alpha\otimes 1)\w [G]=\tilde\tau_*\alpha$ and $T\circ\tilde\tau=j\circ \tau$. Hence
$$
j_*(\mu\di_Y \1_Y)=T_*\big((\alpha\otimes 1)\w [G]\big)=T_*\tilde\tau_*\alpha=j_*\tau_*\alpha=j_*\mu,
$$
which means that \eqref{borr} holds.

\smallskip
The last two equalities can be verified in several ways. 
%Let us choose coordinates $x,y$ on $Y\times Y$, such that $a$ is the origin. 
%First, by a linear change of coordinates we may
%assume that $a=0$. 
Notice that we can choose a \nbh $U\subset Y$ of $a$ and coordinates $(x,y)$ in $U\times U$.
Then $S(\J_\Delta, \mu\times\{a\})$, restricted to $U\times U$, is represented by 
\begin{equation}\label{pudel}
M^{x-y}\wedge (\mu \otimes [a])=
 (M^{x-a}\wedge \mu)\otimes  [a]=\mult_a\mu \cdot [a]\otimes [a]=\mult_a\mu\cdot j_*[a],
\end{equation}
where %$\xi_a$ generates the maximal ideal at $a$ and
the second equality follows from, e.g., \cite[Equation~(4.5)]{aeswy2}. 
Hence, $S(\J_\Delta, \mu\times\{a\})=S_0(\J_\Delta, \mu\times\{a\})$ and so
$$
j_*(\mu\di_Y \{a\})=c(N_\Delta (U\times U)\w S(\J_\Delta, \mu\times\{a\})=S_0(\J_\Delta, \mu\times\{a\}).
$$
Now \eqref{fjarde} follows from \eqref{pudel}.
Since $\mu\di_Y\{a\}$ has dimension $0$ it follows from \eqref{snar2}
that $\mu\bl \{a\}=\mu \di_Y \{a\}$ and thus \eqref{femte} follows. 

Alternatively \eqref{femte} follows from the definition \eqref{lbullet} and the analogous
statement for $\bullet=\bullet_{\Pk^M}$. Then \eqref{fjarde} follows from \eqref{snar2}
as above.
\end{proof}

From \eqref{borr} and \eqref{snar2} it follows that if $\mu$ has pure dimension, then
\begin{equation}\label{tupplur}
\mu \bl \1_Y= 
\sum_{k=0}^{\dim \mu}
(1-\omega_L)^{k-n-\dim \mu -1}(c(TY)^{-1}\wedge \mu)_k,
\end{equation}
since $\dim V=\dim\mu$.  

%cf.\ Examples BLA (de exempel med sjalvsnitt) below. 

\smallskip

%The formula \eqref{kur} can also be written in the more explicit fashion
 %\begin{equation}\label{kur2}
% \mu_1\di_{Y}\mu_2=\frac{1}{c(N_Y\Pr^N)}\sum_{j\ge 0} (1+\omega)^{N+1-j}\w M^{L,\eta}_j (\mu_1\times_J \mu_2).
 %\end{equation}
% 

Let us now mention  a possible way  to express our products as limits of smooth forms times 
$\mu_1\times\mu_2$. It follows from \cite[Proposition~5.7]{aeswy1} that the representatives
$M^\sigma_k\w \hat\mu$ of $S_k(\J,\mu)$, cf.~\eqref{taket0}, can be computed by the formula
\begin{equation}\label{pommac}
M^\sigma_k\w \hat\mu=
 \lim_{\epsilon\to 0} \frac{\epsilon (dd^c|\sigma|^2)^k}
{ (|\sigma|^2+\epsilon)^{k+1}}\w\hat\mu, \quad k=0,1,\ldots.
\end{equation}
In particular we have
 
\begin{prop}
If $\mu_1,\mu_2\in\B(Y)$ are represented by $\hat\mu_1,\hat\mu_2$, 
and $\eta$ is a section of a Hermitian bundle $E\to Y\times Y$ that defines $\J_\Delta$, then
$j_*(\mu_1\di_Y\mu_2)$ is represented by the limits
\begin{equation}\label{snir}
c(N_\Delta (Y\times Y))\w \sum_{k=0}^{\dim V} \lim_{\epsilon\to 0} \frac{\epsilon (dd^c|\eta|^2)^k}
{ (|\eta|^2+\epsilon)^{k+1}}\w (\hat\mu_1\times\hat\mu_2).
\end{equation}
\end{prop}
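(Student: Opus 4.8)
The plan is to reduce the statement to Proposition~\ref{lastbil} together with formula \eqref{pommac} applied in the ambient product $Y\times Y$. The key observation is that the $\di_Y$-product is defined via $j_*(\mu_1\di_Y\mu_2)=c(N_\Delta(Y\times Y))\w S(\J_\Delta,\mu_1\times\mu_2)$, so it suffices to produce a representative of $S(\J_\Delta,\mu_1\times\mu_2)$ in $\GZ(Y\times Y)$ expressed via the claimed limits, and then wedge with a fixed Chern form representative $\hat c(N_\Delta(Y\times Y))$ of $c(N_\Delta(Y\times Y))$.

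First I would fix a representative $\hat\mu_1\times\hat\mu_2$ of $\mu_1\times\mu_2$ in $\GZ(Y\times Y)$ (using property (viii) of Section~\ref{kung}), and fix a section $\eta$ of a Hermitian bundle $E\to Y\times Y$ generating $\J_\Delta$; such a section exists because $Y\times Y$ is projective. Now, by Proposition~\ref{lastbil}, the class $S_k(\J_\Delta,\mu_1\times\mu_2)$ is represented by $M_k^\eta\w(\hat\mu_1\times\hat\mu_2)$, where $M_k^\eta\w(\hat\mu_1\times\hat\mu_2)=\1_\Delta(dd^c\log|\eta|^2)^k\w(\hat\mu_1\times\hat\mu_2)$ in the sense of \eqref{taket0}, independently of the choice of $\eta$ and of the metric. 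Then I would invoke formula \eqref{pommac} — which is exactly \cite[Proposition~5.7]{aeswy1} — to rewrite each such current as the limit
$$
M_k^\eta\w(\hat\mu_1\times\hat\mu_2)=\lim_{\epsilon\to 0}\frac{\epsilon(dd^c|\eta|^2)^k}{(|\eta|^2+\epsilon)^{k+1}}\w(\hat\mu_1\times\hat\mu_2).
$$
Summing over $k$ from $0$ to $\dim V$ is harmless: for $k>\dim V$ the term $S_k(\J_\Delta,\mu_1\times\mu_2)$ has Zariski support on $\Delta\cap(|\mu_1|\times|\mu_2|)$, which has dimension $\le\dim V$, and a generalized cycle class of dimension $\dim(\mu_1\times\mu_2)-k<0$ there must vanish by the dimension principle; alternatively these limits are simply zero for degree reasons once $k$ exceeds $\dim V$. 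Finally, wedging the sum with $\hat c(N_\Delta(Y\times Y))$ — a smooth form, so the product commutes with the $\epsilon\to 0$ limit — gives a representative of $c(N_\Delta(Y\times Y))\w S(\J_\Delta,\mu_1\times\mu_2)=j_*(\mu_1\di_Y\mu_2)$, which is precisely \eqref{snir}.

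The only genuinely delicate point is the passage from the definition \eqref{taket0} of $M_k^\eta\w\hat\mu$ via regularized $\log(|\eta|^2+\epsilon)$ to the alternative expression \eqref{pommac}: this is the content of \cite[Proposition~5.7]{aeswy1}, which I am entitled to assume, and which itself rests on a resolution of singularities making $\eta$ principal. Once that equivalence is granted, the remaining steps — choosing representatives, commuting a smooth form past a limit, and discarding the vanishing higher-$k$ terms — are routine and invoke only the structural properties of $\GZ$ and $\B$ recalled in Section~\ref{prel}. I would therefore present the proof in essentially one line, citing Proposition~\ref{lastbil} for the representative of $S(\J_\Delta,\mu_1\times\mu_2)$, \eqref{pommac} for the limit formula, and the dimension principle for truncating the sum at $\dim V$.
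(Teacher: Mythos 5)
Your overall route coincides with the paper's: there the proposition is stated as an immediate consequence ("in particular") of the definition $j_*(\mu_1\di_Y\mu_2)=c(N_\Delta(Y\times Y))\w S(\J_\Delta,\mu_1\times\mu_2)$, the fact that $M_k^\eta\w(\hat\mu_1\times\hat\mu_2)$ represents $S_k(\J_\Delta,\mu_1\times\mu_2)$ once a global section $\eta$ is given (Section~\ref{sklass}, consistent with Proposition~\ref{lastbil}), and the limit formula \eqref{pommac}; choosing representatives and wedging with a fixed Chern form are indeed the only remaining routine steps.

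There is, however, a genuine error in the one point you single out as needing care, namely the truncation of the sum at $k=\dim V$. The dimension principle eliminates the terms with \emph{small} $k$: $S_k(\J_\Delta,\mu_1\times\mu_2)$ has dimension $\dim\mu_1+\dim\mu_2-k$ and Zariski support in $\Delta\cap(|\mu_1|\times|\mu_2|)\cong V$, so it vanishes precisely when $k<\dim\mu_1+\dim\mu_2-\dim V$. For $\dim V<k\le\dim\mu_1+\dim\mu_2$ the dimension $\dim\mu_1+\dim\mu_2-k$ is not negative (your inequality would require $\dim V\ge\dim\mu_1+\dim\mu_2$, which fails in general), the limits are not zero for degree reasons, and these terms need not vanish: for $\mu_1=\mu_2=\1_Y$ one has $V=Y$, $\dim V=n$, while $S(\J_\Delta,\1_{Y\times Y})$ is the full Segre class $s(N_\Delta(Y\times Y))\w[\Delta]$, whose components $S_k$ are nonzero for $n\le k\le 2n$; discarding the terms with $k>n$ would produce $c(N_\Delta(Y\times Y))\w[\Delta]\neq[\Delta]=j_*(\1_Y\di_Y\1_Y)$, contradicting \eqref{borr}. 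So your argument for dropping the high-$k$ terms fails, and no argument can rescue it: for \eqref{snir} to represent $j_*(\mu_1\di_Y\mu_2)$ the sum must be taken over all $k$ (equivalently up to $\dim\mu_1+\dim\mu_2$), and the upper limit $\dim V$ in the displayed formula has to be read accordingly. With the summation range treated correctly --- the dimension principle being invoked only to discard the low-$k$ terms, which is anyway harmless --- the rest of your proof is fine and is exactly the paper's intended argument.
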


One gets a formula for $\mu_1\di_Y\mu_2$ by taking $\pi_*$ of \eqref{snir}, where
$\pi \colon Y\times Y\to Y$ is the projection onto the first (or the second) factor, since $\pi\circ j=Id_Y$.
One can get similar formulas for $\mu_1\bl\mu_2$ by combining \eqref{snir} and \eqref{snar2}.

\section{Examples}\label{examples}

We first recall the so-called Segre embedding
$$
i\colon \Pk^m_x\times\Pk^n_y\to \Pk^{(m+1)(n+1)-1}.
$$
%$i\colon  \Pk^m_x\times\Pk^n_y\to \Pk^{(m+1)(n+1)-1}$.
%
 % of $Y= \Pk^m_x\times\Pk^n_y$ in 
Let $Y= \Pk^m_x\times\Pk^n_y$ and consider the line bundle $L=\Ok(1)_{\Pk^m_x}\otimes\Ok(1)_{\Pk^n_y}$.
The set of sections
$\{x_jy_k; \ \  0\le j\le m, \ 0\le k\le n\}$ is a basis for $H^0(Y,L)$ and $i$ is the 
associated embedding, cf.~Section~\ref{bl},  
$$
%i\colon Y\to \Pk^{(m+1)(n+1)-1}, \quad
([x],[y])\mapsto [x_0 y_0: \ldots: x_0y_n: \ldots: x_my_0: \ldots : x_my_n].
$$
%called the Segre embedding.
Notice that 
\begin{equation}\label{por}
i^*\omega=\omega_L=\omega_x+\omega_y,
\end{equation}
where $\omega$, $\omega_x$ and $\omega_y$ are  (representatives of) 
$c_1(\Ok(1)_{\Pk^{(m+1)(n+1)-1}})$, 
$c_1(\Ok(1)_{\Pk^m_x})$, 
and $c_1(\Ok(1)_{\Pk^n_y})$, 
%Fubini-Study forms on $\Pk^m_x$ and $\Pk^n_y$,
respectively.
%
%\end{ex}

\smallskip

We now consider the self-intersection of an exceptional divisor.

\begin{ex}
Consider the blowup $Y=Bl_p\Pk^2$ of $\Pk^2$ at the point $p=[1:0:0]$, and let
both $\mu_1$ and $\mu_2$ be the exceptional divisor $E$.  We have the embedding
$$
j\colon Y\to Y':= \Pk^2_x\times \Pk^1_y
$$
so that $j(Y)=\{f=0\}$, where $f$ is the section $f=x_1y_1-x_2y_0$ of $\Ok(1)_{\Pk^2_x}\otimes\Ok(1)_{\Pk^1_y}$.
Let $L\to Y$ be the pullback to $Y$ of this line bundle, which we for simplicity denote in the same way
so that
$$
\omega_L=\omega_x+\omega_y.
$$
We now compose with the Segre embedding
$$
\sigma\colon  \Pk^2_x\times \Pk^1_y\to \Pk^5_z,
\quad
([x_0:x_1:x_2],[y_0:y_1])\mapsto [x_0y_0:x_0y_1:x_1y_0:x_1y_1: x_2y_0:x_2y_1]
$$
and get the embedding $i=\sigma\circ j\colon Y\to \Pk^5$.  
%Since  %$i^*\Ok(1)_{\Pk^5}=
%$L=\Ok(1)_{\Pk^2_x}\times\Ok(1)_{\Pk^1_y}$,
%therefore
%$$
%\omega_L=\omega_x+\omega_y.
%$$
%
%
We claim that
\begin{equation}\label{solo1}
E\bl E=E
\end{equation}
and 
\begin{equation}\label{solo2}
E\di_Y E=E-\omega_L\w E.
\end{equation}
In fact,  the image of $E$ in $\Pk^2\times \Pk^1$ is $\{[1:0:0]\}\times \Pk^1_y$ so the image in
$\Pk^5$ is the line $\{[y_0:y_1: 0:0:0:0]\}$. Therefore  $i_*E\bullet i_*E=i_*E$, see
the remark after \cite[Theorem~1.1]{aeswy2},
 and thus \eqref{solo1} holds. 
%
%\smallskip
Next we compute $E\di_Y E$.  In view of \eqref{solo1} only the term with $\ell=1$ occurs in 
\eqref{snar1} and since $\ell-d-1=1-2-1=-2$ we have 
$$
E\di_Y E=(1+\omega_L)^{-2}\w c(TY)\w E.
$$
Now, cf.~\cite[Eq (7.5)]{aeswy1},  $c(N_Y Y')=c(L)=1+\omega_x+\omega_y$ and thus 
$$
c(TY)= c(TY')/c(N_Y Y')=(1+\omega_x)^3(1+\omega_y)^2 /( 1+\omega_x+\omega_y).
$$
Since $\omega_x=0$ on  $E$ we have, cf.~\eqref{por}, 
$$
E\di_Y E=\frac{1}{1+\omega_y}\w E=(1-\omega_y)\w E=(1-\omega_L)\w E.
$$
%and thus \eqref{solo2} follows.
\end{ex}

Let us next look at an example where $Y$ is embedded into $\Pk^M$ for a
minimal $M$, and where the terms  $\cdots$ of lower dimension in \eqref{studsmatta}  do not vanish.

\begin{ex}
Let $Y=\Pk^1_x\times \Pk^1_y$ and let 
\[
i: Y\to \Pk^3, \quad
\big ([x_0:x_1], [y_0:y_1]\big ) \mapsto [x_0 y_0 : x_0y_1:x_1y_0:
x_1y_1],
\]
be the Segre embedding.  
Note that 
$Y\cdot_YY=Y$
since it is a proper intersection. It
follows from Theorem~\ref{main1} that 
\begin{equation}\label{butter}
Y\di_YY=Y.
\end{equation}
We want to compute $Y\bl Y$. Since $\omega_L=i^* \omega = \omega_x+\omega_y$, 
cf.~\eqref{por}, it follows that 
\[
\omega_L^2 = (\omega_x + \omega_y)^2 = 2 \omega_x\w\omega_y
\]
and thus
\[
\deg_L Y = \int_Y \omega_L^2 = \int_{\Pk^1_x\times \Pk^1_y}
2\omega_x\w\omega_y = 2.
\]
Since $\hat\rho=\dim Y+\dim Y-3=1\ge 0$, the B\'ezout formula 
\[
\deg_L (Y\bl Y)=\deg_L Y\cdot \deg_L Y
\]
holds,  cf. Proposition~\ref{sugga}. 
Thus
$ Y\bl Y$ must have degree $2\cdot 2=4$. On the other hand, $\cdots$ in \eqref{studsmatta} can only contain a term 
$\mu$ of dimension $\hat\rho=1$, 
since all components of $i_*Y\bullet i_*Y$ must have dimension at least the expected dimension $\hat\rho$,
cf.~\eqref{lbull} and Section~\ref{kongas}.  
Thus $\deg_L\mu=2$. For symmetry reasons it is natural to guess
that 
\begin{equation}\label{bulla}
 Y\bl Y=Y+ \omega_L\w Y.
 \end{equation}
 
 Let us check \eqref{bulla} by means of \eqref{snar1} in Proposition \ref{snarstucken} and \eqref{butter}.
Notice that $d=\dim Y + \dim Y =4$, $\hat\rho= d-3=1$, and $V=Y$ so that
$\dim V=2$. Moreover,  
\begin{multline*}
c(TY)=c(T\Pk^1_x)\wedge c(T\Pk^1_y) = (1+\omega_x)^2 \wedge
(1+\omega_y)^2=(1+2\omega_x)\wedge (1+2\omega_y) =\\
1+2(\omega_x+\omega_y) + 4 \omega_x \wedge \omega_y=
1+2\omega_L + 2\omega_L^2. 
\end{multline*} 
%Here we have used something about Chern forms and
%$c(T\P^N)=(1+\omega)^{N+1}$. 
%First let guess that $Y\bl Y=Y+ \omega_L\w Y$ and apply \eqref{snar1}. 
Assuming \eqref{bulla}, the right hand side of \eqref{snar1} equals 
\begin{multline*}
%\mu_1\di_Y\mu_2=
%\sum_{\ell=1}^{2}
%(1+\omega_L)^{\ell-5}\w (1+2\omega_L + 2\omega_L^2 )\w(Y +
%\omega_L\wedge Y)_\ell
%= \\
%\frac{1}{(1+\omega_L)^4}  (1+2\omega_L + 2\omega_L^2 )
%\omega_L\wedge Y
%+ 
%\frac{1}{(1+\omega_L)^3}  (1+2\omega_L + 2\omega_L^2 )\w Y
%=\\
%\frac{1}{(1+\omega_L)^4}  (1+2\omega_L + 2\omega_L^2 ) (\omega_L + 1+
%\omega_L ) \wedge Y
%=
c(TY) \w \sum_{\ell=1}^{2}
(1+\omega_L)^{\ell-5}\w(Y +
\omega_L\wedge Y)_\ell
= 
c(TY) \w \left (\frac{1}{(1+\omega_L)^4} \w 
\omega_L\wedge Y
+ 
\frac{1}{(1+\omega_L)^3}  \w  Y \right )
=\\
c(TY) \w \frac{1}{(1+\omega_L)^4} (\omega_L + 1+
\omega_L ) \wedge Y
=
\frac{1}{(1+\omega_L)^4} (1+2\omega_L + 2\omega_L^2 )\wedge  (1+
2\omega_L ) \wedge Y 
= \\
\frac{1}{1+4\omega_L + 6\omega_L^2} (1+4\omega_L + 6\omega_L^2 )\wedge
Y = Y = Y\di_Y Y.
\end{multline*}
Hence our guess is correct. 
Clearly one can just as well start with \eqref{butter} and apply
\eqref{snar2}.  By similar computations one then gets \eqref{bulla}, as expected.
%First note that 
%\[
%c(TY)^{-1}=\frac{1}{1+2\omega_L+2\omega_L^2}=
%1+(-2\omega_L -2\omega_L^2) + (-2\omega_L -2\omega_L^2)^2
%= 1-2\omega_L -2\omega_L^2+ 4 \omega_L^2 
%= 1-2\omega_L + 2\omega_L^2.
%\]
 %A similar computation yields 
%\begin{multline*}
%\frac{1}{(1-\omega_L)^3}
%=
%\frac{1}{1-3\omega_L+ 3 \omega_L^2}
%=
%1+ (3\omega_L- 3 \omega_L^2) + (3\omega_L- 3 \omega_L^2)^2
%=
%1 + 3\omega_L- 3 \omega_L^2 + 9 \omega_L^2
%=
%1+ 3\omega_L+ 6\omega_L^2
%\end{multline*}
%\[
%\frac{1}{(1-\omega_L)^3}= 
%1+ 3\omega_L+ 6\omega_L^2, ~~~~~~~~ 
%\frac{1}{(1-\omega_L)^4}= 
%1+ 4\omega_L+ \ldots 
%\]
%Plugging this into \eqref{snar2} we get 
%\begin{multline*}
%\mu_1\bl\mu_2=\sum_{k=0}^{2}
%(1-\omega_L)^{k-5}\big ( (1-2\omega_L + 2\omega_L^2)\wedge Y\big)_k
%=\\
%\frac{1}{(1-\omega_L)^5}\w  2\omega_L^2\w Y 
%+
%\frac{1}{(1-\omega_L)^4} \w (-2\omega_L)\w Y 
%+
%\frac{1}{(1-\omega_L)^3} \w Y 
%=\\
%\big (2\omega_L^2  + (1+4\omega_L)\w (-2\omega_L) + 
%(1+ 3\omega_L+ 6\omega_L^2) \big )\w Y = (1+ \omega_L)\w Y
%\end{multline*} 
%which is the same result we got above. 
%
%
\end{ex}

The following example, which is an elaboration of
\cite[Example~8.10]{aeswy2}, shows that the product $\di_Y$ is not
associative.

\begin{ex}\label{kokong} 
 Consider the hypersurface  $Z=\{x_2x_1^m-x_3^2x_0^{m-1}=0\}$ and the
 hyperplanes $H_2=\{x_2=0\}$ and $H_3=\{x_3=0\}$ in 
 $Y=\Pk^3$. (All products here are taken
in $\Pk^3$; let $\di= \di_{\Pk^3}$ and $\cdot =\cdot_{\Pk^3}$.) 
Since $H_2$ and $Z$ intersect properly, 
\begin{equation}\label{spor1}
%H_2\di_{\P^3} Z=H_2\bullet_{\P^3} Z=H_2\cdot_{\P^3} Z =\\
H_2\di Z=H_2\bullet Z=H_2\cdot Z =
2\{x_2=x_3=0\}+(m-1)\{x_0=x_2=0\}=: 2 A + (m-1)B. 
\end{equation}
Moreover, since $H_3$ and $B$ intersect properly, 
\begin{equation}\label{spor2}
H_3\di B=H_3\cdot B =[b],
\end{equation}
% and the intersection is the
%point $b=[0,1,0,0]$; thus 
%Thus 
%\[
%H_3\di B=H_2\bullet B=H_3\cdot B =[b], 
%\]
where $b=[0,1,0,0]$. 
In \cite[Example~8.10]{aeswy2} is showed that $H_3\bullet A=A$. 
In view of \eqref{potatis} it follows that 
\begin{equation}\label{spor3}
H_3\di A = (1+\omega) \wedge A.
\end{equation}
Indeed $\rho=\dim H_3 + \dim A-n=2+1-3=0$ and $H_3\bullet A=A$ has pure
dimension $1$. 
By \eqref{spor1}, \eqref{spor2} and \eqref{spor3} we conclude 
\begin{equation}\label{spor4}
H_3\di (H_2 \di Z) = 2(1+\omega)\wedge A + (m-1)[b]. 
\end{equation}
Next, since $H_3$ and $H_2$ intersect properly, it follows that 
$H_3\di H_2=H_3\cdot H_2 =A$. 
In \cite[Example~8.10]{aeswy2} we showed that
$A\bullet Z=A + m[a]$, where $a=[1:0:0:0]$. 
As above, we compute $A\di Z$ by applying \eqref{potatis}. To do this, note that
$\rho=\dim A + \dim Z -n=1+2-3=0$ and $V=|A|\cap |Z| =|A|$, so that
$\dim V=1$. Thus
\begin{equation}\label{spor5}
(H_3\di H_2) \di Z = \sum_{\ell=0}^1 (1+\omega)^\ell \wedge (A\bullet Z)_\ell
=m[a]+(1+\omega)\wedge A.
\end{equation}
Clearly the right hand sides in \eqref{spor4} and \eqref{spor5} are different in $\B(\Pk^3)$. 
\end{ex}

\end{document}